\title[Wave equation in rank one]
{The shifted wave equation\\
on Damek--Ricci spaces\\
and on homogeneous trees}
\author[J.--Ph. Anker, P. Martinot, E. Pedon, A.G. Setti]
{Jean--Philippe Anker, Pierre Martinot,\\
Emmanuel Pedon \& Alberto G. Setti}
\address{Jean--Philippe Anker,
Universit\'e d'Orl\'eans \& CNRS,
F\'ed\'eration Denis Poisson (FR 2964) \& Laboratoire MAPMO (UMR 6628),
B\^atiment de math\'ematiques -- Route de Chartres,
B.P. 6759 -- 45067 Orl\'eans cedex 2 -- France}
\email{anker@univ-orleans.fr}
\address{Pierre Martinot,
2 rue de la Maladrie,
54630 Flavigny--sur--Moselle, France}
\email{pierremartinot@orange.fr}
\address{Emmanuel Pedon,
Universit\'e de Reims (Champagne-Ardenne),
F\'ed\'eration ARC (FR 3399)
\& Laboratoire de Math\'ematiques (EA 4535),
Moulin de la Housse,
B.P.~1039,
51687 Reims Cedex 2,
France}
\email{emmanuel.pedon@univ-reims.fr}
\address{Alberto G. Setti,
Universit\`a degli Studi dell'Insubria,
Dipartimento di Scienza e Alta Tecnologia,
Sezione di Matematica,
via Valleggio 11,
22100 Como,
Italia}
\email{alberto.setti@uninsubria.it}
\date{\today}
\subjclass[2010]
{Primary 35L05, 43A85;
Secondary 20F67, 22E30, 22E35, 33C80, 43A80, 58J45}
\keywords{
Abel transform,
Damek--Ricci space,
homogeneous tree,
Huygens' principle,
hyperbolic space,
wave equation,
wave propagation}
\thanks{Research partially supported by the European Commission
(HCM Network \textit{Fourier Analysis\/} 1994--1997,
TMR Network \textit{Harmonic Analysis\/} 1998--2002,
IHP Network \textit{HARP\/} 2002--2006)
and by a French-Italian research project
(PHC Galil\'ee 25970QB \textit{VAMP\/} 2011--2012).}
\newtheorem{lemma}{Lemma}[section]
\newtheorem{theorem}[lemma]{Theorem}
\newtheorem{corollary}[lemma]{Corollary}
\newtheorem{remark}[lemma]{Remark}
\newcommand{\A}{\mathcal{A}}
\newcommand{\C}{\mathbb{C}}
\newcommand\const{\operatorname{const.}}
\newcommand{\E}{\mathcal{E}}
\newcommand{\F}{\mathcal{F}\ssb}
\renewcommand{\H}{\mathcal{H}}
\newcommand{\K}{\mathcal{K}}
\renewcommand{\L}{\mathcal{L}}
\newcommand{\N}{\mathbb{N}}
\renewcommand{\P}{\mathcal{P}}
\newcommand{\R}{\mathbb{R}}
\newcommand{\rad}{\operatorname{rad}}
\newcommand\ssb{\hskip-.25mm}
\newcommand\ssf{\hskip.25mm}
\newcommand{\T}{\mathbb{T}}
\newcommand{\W}{\mathcal{W}}
\newcommand{\Z}{\mathbb{Z}}
\begin{document}

\maketitle

\centerline{
To appear in
\textit{Trends in Harmonic Analysis\/}
}\centerline{
(\textit{dedicated to A. Fig\`a--Talamanca
on the occasion of his retirement\/})
}\centerline{
M.A. Picardello (ed.),
Springer INdAM Ser.,
Springer--Verlag, pp. 1--24
}

\begin{abstract}
We solve explicitly the shifted wave equation
\begin{equation*}
\partial_{\ssf t}^{\ssf2}u(x,t)=
(\Delta_{\ssf x}\ssb+\tfrac{Q^2}4)\ssf u(x,t)
\end{equation*}
on Damek--Ricci spaces,
using \'Asgeirsson's theorem
and the inverse dual Abel transform.
As an application, we investigate Huygens' principle.
A similar analysis is carried out
in the discrete setting of homogeneous trees.
\end{abstract}

\section{Introduction}
\label{Section1}

In the book \cite{H1} Helgason uses
\'Asgeirsson's mean value theorem (see Theorem II.5.28)
to solve the wave equation
\begin{equation}\label{WaveEquationRn}\begin{cases}
\,\partial_{\ssf t}^{\ssf2}u(x,t)
=\Delta_{\ssf x}\ssf u(x,t)\ssf,\\
\,u(x,0)=f(x)\ssf,\,
\partial_{\ssf t}|_{\ssf t=0}\,u(x,t)=g(x)\ssf,\\
\end{cases}\end{equation}
on Euclidean spaces \ssf$\R^d$
(see Exercise II.F.1 
and its solution pp. 574--575)
and the shifted wave equation
\begin{equation}\label{WaveEquationHnR}\begin{cases}
\,\partial_{\ssf t}^{\ssf2}u(x,t)
=\bigl(\Delta_{\ssf x}\ssb
+\ssb[\frac{d-1}2]^2\bigr)\ssf u(x,t)\ssf,\\
\,u(x,0)=f(x)\ssf,\,
\partial_{\ssf t}|_{\ssf t=0}\,u(x,t)=g(x)\ssf,\\
\end{cases}\end{equation}
on real hyperbolic spaces $H^d\ssf(\R)$
(see Exercise II.F.2 
and its solution pp. 575--577).
In this work we extend this approach
both to Damek--Ricci spaces and to homogeneous trees.
Along the way we clarify the role of the inverse dual Abel transform
in solving the shifted wave equation.

Recall that Damek--Ricci spaces are Riemannian manifolds,
which contain all hyperbolic spaces
$H^d\ssf(\R)$, $H^d\ssf(\C)$,
$H^d\ssf(\mathbb{H})$, $H^2(\mathbb{O})$
as a small subclass
and share nevertheless several features with these spaces.
Before \cite{H1}
the shifted wave equation \eqref{WaveEquationHnR}
on $H^d\ssf(\R)$
was solved explicitly in \cite[Section 7]{LP}.
Other hyperbolic spaces were dealt with
in \cite{BO,IV1,IV2}
and Damek--Ricci spaces in \cite{N}.
All these approaches are awkward in our opinion.
On one hand,
\cite{LP}, \cite{BO} and \cite{IV1,IV2}
rely on the method of descent i.e.~on shift operators,
which reduce the problem to checking formulae in low dimensions.
Moreover \cite{BO} involves classical compact dual symmetric spaces
and doesn't cover the exceptional case.
On the other hand,
\cite{N} involves complicated computations
and follows two different methods\,:
Helgason's approach for hyperbolic spaces
and heat kernel expressions \cite{ADY}
for general Damek--Ricci spaces.
In comparison we believe that
our presentation is simpler and more conceptual.

Several other works deal
with the shifted wave equation \eqref{WaveEquationHnR}
without using explicit solutions.
Let us mention \cite{BOS} (see also \cite[Section V.5]{H2})
for Huygens' principle and the energy equipartition
on Riemannian symmetric spaces of the noncompact type.
This work was extended to Damek--Ricci spaces in \cite{AD},
to Ch\'ebli--Trim\`eche hypergroups in \cite{EY}
and to  the trigonometric Dunkl setting in \cite{B, A}.
The nonlinear shifted wave equation
was studied in \cite{T, APV1, APV2},
first on real hyperbolic spaces and next on Damek--Ricci spaces.
These works involve sharp dispersive and Strichartz estimates
for the linear equation.
Related $L^p\!\to\!L^p$ estimates were obtained
in \cite{I} on hyperbolic spaces.

Our paper is organized as follows.
In Section 2, we review Damek--Ricci spaces
and spherical analysis thereon.
We give in particular explicit expressions for the Abel transform,
its dual and the inverse transforms.
In Section \ref{Section3}
we extend \'Asgeirsson's mean value theorem to Damek--Ricci spaces,
apply it to solutions to the shifted wave equation
and deduce explicit expressions, using the inverse dual Abel transform.
As an application, we investigate Huygens' principle.
Section \ref{Section4} deals
with the shifted wave equation on homogeneous trees,
which are discrete analogs of hyperbolic spaces.

Most of this work was done several years ago.
The results on Damek-Ricci spaces were cited in \cite{R}
and we take this opportunity to thank Fran{\c c}ois Rouvi\`ere
for mentioning them and for encouraging us to publish details.
We are also grateful to Nalini Anantharaman
for pointing out to us the connection
between our discrete wave equation \eqref{WaveEquationTree} on trees
and recent works \cite{BL1, BL2} of Brooks and Lindenstrauss.

\section{Spherical analysis on Damek--Ricci spaces}
\label{Section2}

We shall be content with a brief review about Damek--Ricci spaces
and we refer to the lecture notes \cite{R} for more information.

Damek--Ricci spaces are solvable Lie groups
\ssf$S\!=\!N\hspace{-.75mm}\rtimes\!A$\ssf,
which are extensions of Heisenberg type groups \ssf$N$
by $A\!\cong\!\R$
and which are equipped with a left-invariant Riemannian structure.
At the Lie algebra level,
\vspace{-3mm}
\begin{equation*}
\mathfrak{s}\;\equiv\;\underbrace{\R^m\,\oplus
\overbrace{\vphantom{\big|}\R^k
}^{\vphantom{\big|}\textstyle\mathfrak{z}}
}_{\textstyle\vphantom{|}\mathfrak{n}}
\oplus\!\underbrace{\vphantom{\big|}\R
}_{\textstyle\vphantom{|}\mathfrak{a}}\,,
\end{equation*}
\vspace{-5mm}

\noindent
with Lie bracket
\begin{equation*}
[(X,Y,z),(X'\!,Y'\!,z^{\ssf\prime})]
=(\tfrac z2\ssf X'\!
-\tfrac{z^{\ssf\prime}}2X,
\ssf z\,Y'\!-z^{\ssf\prime}\ssf Y+[\ssf X,X'],
0)
\end{equation*}
and inner product
\begin{equation*}
\langle(X,Y,z),(X'\!,Y'\!,z^{\ssf\prime})\rangle=
\langle\ssf X,X'\rangle_{\R^m}
+\langle\ssf Y,Y'\rangle_{\R^k}
+z\,z^{\ssf\prime}.
\end{equation*}
At the Lie group level,
\vspace{-2mm}
\begin{equation*}
S\;\equiv\;\underbrace{\R^m\,\times
\overbrace{\vphantom{\big|}\R^k
}^{\textstyle\vphantom{|}Z}}_{\textstyle\vphantom{|}N}
\times\underbrace{\vphantom{\big|}\R
}_{\textstyle\vphantom{|}A}\,,
\end{equation*}
\vspace{-5mm}

\noindent
with multiplication
\begin{equation*}
(X,Y,z)\cdot(X'\!,Y'\!,z^{\ssf\prime})
=(X\!+\ssb e^{\frac z2}X'\ssb,
Y\!+\ssb e^{\ssf z}\,Y'\!
+\ssb\tfrac12\,e^{\frac z2}\ssf[\ssf X,X'],
\ssf z\ssb+\ssb z^{\ssf\prime})\ssf.
\end{equation*}
So far \ssf$N$ could be
any simply connected nilpotent Lie group
of step $\le\ssb2$\ssf.
Heisenberg type groups are characterized by conditions
involving the Lie bracket and the inner product on $\mathfrak{n}$,
that we shall not need explicitly.
In particular \ssf$Z$ is the center of \ssf$N$
and \ssf$m$ \ssf is even.
One denotes by
\begin{equation*}
n=m\ssb+\ssb k\ssb+\!1
\end{equation*}
the (manifold) dimension of \ssf$S$
and by
\begin{equation*}
Q=\tfrac m2\!+\ssb k
\end{equation*}
the so--called homogeneous dimension of \ssf$N$.

Via the Iwasawa decomposition,
all hyperbolic spaces
$H^d\ssf(\R)$, $H^d\ssf(\C)$,
$H^d\ssf(\mathbb{H})$, $H^2(\mathbb{O})$
can be realized as Damek--Ricci spaces,
real hyperbolic spaces corresponding to
the degenerate case where \ssf$N$ is abelian.
But most Damek--Ricci spaces are not symmetric, although harmonic,
and thus provide numerous counterexamples
to the Lichnerowicz conjecture \cite{DR}.
Despite the lack of symmetry,
radial analysis on \ssf$S$
is similar to the hyperbolic space case
and fits into Jacobi function theory \cite{K}.

In polar coordinates,
the Riemannian volume on \ssf$S$ may be written as
\,$\delta(r)\ssf dr\ssf d\sigma$,
where
\vspace{-3mm}
\begin{align*}
\delta(r)
&=\,\overbrace{
2^{\ssf m+1}\ssf\pi^{\frac n2}\,\Gamma\bigl(\tfrac n2\bigr)^{-1}
}^{\vphantom{|}\const}\,
\bigl(\ssf\sinh\tfrac r2\bigr)^{\ssb m}\,
(\ssf\sinh r)^k\\
&=\,\underbrace{
2^{\ssf n}\ssf\pi^{\frac n2}\,\Gamma\bigl(\tfrac n2\bigr)^{-1}
}_{\vphantom{|}\const}\,
\bigl(\ssf\cosh\tfrac r2\bigr)^{\ssb k}\,
\bigl(\ssf\sinh\tfrac r2\bigr)^{\ssb n-1}
\end{align*}
is the common surface measure of all spheres of radius \ssf$r$ in $S$
and \ssf$d\sigma$ denotes the normalized surface measure
on the unit sphere in \ssf$\mathfrak{s}$\ssf.
We shall not need the full expression of
the Laplace-Beltrami operator $\Delta$ on $S$
but only its radial part
\begin{equation*}
\rad\Delta
=\bigl(\tfrac\partial{\partial r}\bigr)^{\ssb2}\!
+\underbrace{\bigl\{
\tfrac{n\ssb-\!1}2\ssf\coth\tfrac r2
+\tfrac k2\ssf\tanh\tfrac r2\ssf
\bigr\}}_{\textstyle\frac{\delta'(r)}{\delta(r)}}\ssf
\tfrac{\partial}{\partial r}
\end{equation*}
\vspace{-5mm}

\noindent
on radial functions and its horocyclic part
\begin{equation}\label{HorocyclicDR}
\Delta f=\bigl(\tfrac\partial{\partial z}\bigr)^{\ssb2}\ssb f
-\ssf Q\,\tfrac\partial{\partial z}\ssf f
\end{equation}
on \ssf$N$--\ssf invariant functions
i.e.~on functions \ssf$f\!=\!f(X,Y,z)$
depending only on $z$\ssf.
The Laplacian \ssf$\Delta$ \ssf commutes both
with left translations and with the averaging projector
\begin{equation*}
f^{\ssf\sharp}(r)
=\ssf\tfrac1{\delta(r)}\int_{S(e,r)}\!dx\,f(x)\,,
\end{equation*}
hence with all spherical means
\begin{equation*}
f_x^{\,\sharp}(r)
=\ssf\tfrac1{\delta(r)}\int_{S(x,r)}\!dy\,f(y)\,.
\end{equation*}
Thus
\begin{equation}\label{RadialDR}
(\Delta f)_x^{\ssf\sharp}
=(\rad\Delta\ssb)f_x^{\ssf\sharp}\,.
\end{equation}
Finally $\Delta$ has a spectral gap. More precisely
its \ssf$L^2$--\ssf spectrum is equal to the half--line
\ssf$\bigl(-\ssf\infty,-\frac{Q^2}4\ssf\bigr]$.

Radial Fourier analysis on \ssf$S$
may be summarized by the following commutative diagram
in the Schwartz space setting \cite{ADY}\,:
\vspace{1mm}
\begin{equation*}\label{diagramDR}\begin{aligned}
&\hspace{-2mm}\mathcal{S}(\R)_{\ssf\text{even}}\\
\mathcal{H}\!\nearrow\hspace{1.5mm}\approx\hspace{0mm}
&\hspace{6mm}\approx\hspace{1.5mm}\nwarrow\,\mathcal{F}\\
\mathcal{S}(S)^{\ssf\sharp}\hspace{7mm}
&\overset{\textstyle\approx}
{\underset{\textstyle\A\vphantom{\big|}}\longrightarrow}
\hspace{7mm}\mathcal{S}(\R)_{\ssf\text{even}}
\end{aligned}\end{equation*}
Here
\begin{equation*}
\mathcal{H}f(\lambda)
=\int_{\ssf S}\,dx\;\varphi_\lambda(x)\,f(x)
\end{equation*}
denotes the spherical Fourier transform on \ssf$S$,
\begin{equation*}
\A f(z)=\ssf e^{-\frac Q2\ssf z}\int_{\ssf\R^m}\!dX\int_{\ssf\R^k}\!dY\,f(X,Y,z)
\end{equation*}
the Abel transform,
\begin{equation*}
\F f(\lambda)=\int_{\ssf\R}dz\;e^{\,i\ssf\lambda\ssf z}\,f(z)
\end{equation*}
the classical Fourier transform on \ssf$\R$
\ssf and \,$\mathcal{S}(S)^\sharp$
\ssf the space of smooth radial functions
\ssf$f(x)\!=\!f(|x|)$ on \ssf$S$ \ssf such that
\begin{equation*}
\sup\nolimits_{\ssf r\ssf\ge\ssf0}\,
(1\!+\ssb r)^M\,e^{\ssf\frac Q2\ssf r}\;\bigl|
\bigl(\tfrac\partial{\partial r}\bigr)^{\ssb N}\hspace{-.6mm}f(r)
\hspace{.15mm}\bigr|<+\infty
\end{equation*}
for every \ssf$M\ssb,
\ssf N\hspace{-.75mm}\in\!\N$\hspace{.1mm}.
Recall that the Abel transform and its inverse
can be expressed explicitly in terms of Weyl fractional transforms,
which are defined by
\begin{align*}
\mathcal{W}_\mu^{\ssf\tau}f\ssf(r)
=\tfrac1{\Gamma(\mu\ssf+M)}
&\int_{\,r}^{+\infty}\!d\ssf(\cosh\tau s)\,
(\cosh\tau s\ssb-\ssb\cosh\tau r)^{\ssf\mu+M-1}\ssf
\bigl(-\tfrac d{d\ssf(\cosh\tau s)}\bigr)^{\ssb M}\ssb f\ssf(s)
\end{align*}
for \ssf$\tau\!>\!0$
\ssf and for \ssf$\mu\!\in\!\C$\ssf,
\ssf$M\hspace{-.75mm}\in\!\N$ \ssf such that
\ssf$\operatorname{Re}\mu\!>\!-M$.
Specifically,
\begin{equation*}
\A\ssf=\ssf c_1\,
\mathcal{W}_{m/2}^{\ssf1/2}\ssb\circ\mathcal{W}_{k/2}^{\,1}
\quad\text{and}\quad
\A^{-1}\ssb=\tfrac1{c_1}\,
\W_{-k/2}^{\,1}\ssb\circ\W_{-m/2}^{\ssf1/2}\,,
\end{equation*}
where \,$c_1\ssb=2^{\frac{3\ssf m+k}2}\,\pi^{\frac{m+k}2}$.
More precisely,
\begin{equation*}
\A^{-1}\ssb f\ssf(r)
=\tfrac1{c_1}\,\bigl(-\tfrac d{d\ssf(\cosh r)}\bigr)^{\!\frac k2}\ssf
\bigl(-\tfrac d{d\,(\cosh\frac r2)}\bigr)^{\!\frac m2}f\ssf(r)
\end{equation*}
if \ssf$n$ \ssf is odd i.e.~$k$ \ssf is even,
and
\begin{equation*}
\A^{-1}\ssb f\ssf(r)
=\tfrac1{c_1\ssf\sqrt{\pi\ssf}}
\int_{\,r}^{+\infty}\hspace{-1mm}
\tfrac{ds}{\sqrt{\ssf\cosh s\,-\,\cosh r\ssf}}\ssf
\bigl(-\tfrac d{ds}\bigr)\ssf
\bigl(-\tfrac d{d\ssf(\cosh s)}\bigr)^{\!\frac{k-1}2}\ssf
\bigl(-\tfrac d{d\ssf(\cosh\frac s2)}\bigr)^{\!\frac m2}f\ssf(s)
\end{equation*}
if \ssf$n$ \ssf is even i.e.~$k$ \ssf is odd.
Similarly, the dual Abel transform
\begin{equation}\label{DualAbelTransform}
\A^*\ssb f\ssf(r)
=\bigl(\ssf\widetilde{f}\,\bigr)^\sharp(r)\ssf,
\quad\text{where \,}
\widetilde{f}\ssf(X,Y,z)
=e^{\ssf\frac Q2\ssf z}\,f(z)\,,
\end{equation}
and its inverse can be expressed explicitly
in terms of Riemann-Liouville fractional transforms
\,$\mathcal{R}_\mu^{\ssf\tau}$\ssf,
which are defined by
\begin{equation*}
\mathcal{R}_\mu^{\ssf\tau}\ssf f\ssf(r)
=\tfrac1{\Gamma(\mu\ssf+M)}
\int_{\,0}^{\,r}\ssb d\ssf(\cosh\tau s)\,
(\cosh\tau r\ssb-\ssb\cosh\tau s)^{\ssf\mu+M-1}\,
\bigl(\ssf\tfrac d{d\ssf(\cosh\tau s)}\bigr)^{\ssb M}\ssb f\ssf(s)
\end{equation*}
for \ssf$\tau\!>\!0$
\ssf and for \ssf$\mu\!\in\!\C$\ssf,
\ssf$M\hspace{-.7mm}\in\!\N$ \ssf such that
\ssf$\operatorname{Re}\mu\!>\!-M$.

\begin{theorem}\label{TheoremDualAbelTransform}
The dual Abel transform \eqref{DualAbelTransform}
is a topological isomorphism between
\linebreak
\,$C^{\ssf\infty}\ssb(\R)_{\text{\rm even}}$
and
\,$C^{\ssf\infty}\ssb(S)^\sharp\ssb\equiv
C^{\ssf\infty}\ssb(\R)_{\text{\rm even}}$.
Explicitly,
\begin{equation*}
\A^*\ssb f\ssf(r)=\tfrac{c_{\ssf2}}2\,\bigl(\sinh\tfrac r2\bigr)^{\ssb-m}\ssf
(\sinh r)^{-(k-1)}\;\mathcal{R}_{\ssf k/2}^{\,1}\ssf
\bigl\{\bigl(\cosh\tfrac\cdot2\bigr)^{-1}\,\mathcal{R}_{m/2}^{\ssf1/2}\ssf
\bigl[\bigl(\sinh\tfrac\cdot2\bigr)^{-1}\ssb f\ssf\bigr]\bigr\}(r)
\end{equation*}
and
\begin{equation*}
(\A^*)^{-1}f(r)=\tfrac1{c_{\ssf2}}\,\tfrac d{dr}\,
\bigl(\mathcal{R}_{-m/2}^{\,1/2}\ssb\circ\mathcal{R}_{-k/2+1}^{\,1}\bigr)\ssf
\bigl\{\bigl(\sinh\tfrac\cdot2\bigr)^{\ssb m}\ssf
(\sinh\,\cdot\ssf)^{\ssf k-1}f\ssf\bigr\}\ssf(r)
\end{equation*}
where \,$c_{\ssf2}\ssb
=\frac{\vphantom{\big|}2^{\frac{n-1}2}\ssf\Gamma(\frac n2)}
{\vphantom{\big|}\sqrt{\pi\ssf}}
=\frac{\vphantom{\big|}(n\ssf-\ssf1)\ssf!}
{\vphantom{\big|}2^{\frac{n-1}2}\ssf\Gamma(\frac{n\ssf+1}2)}$\ssf.
More precisely,
\vspace{-2mm}
\begin{equation*}
(\A^*)^{-1}f(r)=\tfrac1{c_{\ssf2}}\,\tfrac d{dr}\,
\bigl(\ssf\tfrac d{d\ssf(\cosh\frac r2)}\bigr)^{\!\frac m2}\ssf
\bigl(\tfrac d{d\ssf(\cosh r)}\bigr)^{\!\frac k2-1}\ssf
\bigl\{\bigl(\sinh\tfrac r2\bigr)^{\ssb m}\ssf
(\sinh r)^{\ssf k-1}f(r)\bigr\}
\end{equation*}
if \ssf$n$ \ssf is odd i.e.~$k$ \ssf is even,
and
\begin{equation*}
(\A^*)^{-1}f(r)=\tfrac1{c_{\ssf2}\ssf\sqrt{\pi\ssf}}\,\tfrac d{dr}\,
\bigl(\tfrac d{d\ssf(\cosh\frac r2)}\bigr)^{\!\frac m2}\ssf
\bigl(\tfrac d{d\ssf(\cosh r)}\bigr)^{\!\frac{k-1}2}\!
\int_{\,0}^{\,r}\hspace{-1mm}
\tfrac{ds}{\sqrt{\ssf\cosh r\,-\,\cosh s\ssf}}\,
\bigl(\sinh\tfrac s2\bigr)^{\ssb m}\,
(\sinh s)^{\ssf k}\ssf f(s)
\end{equation*}
if \ssf$n$ \ssf is even i.e.~$k$ \ssf is odd.
\end{theorem}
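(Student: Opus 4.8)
The plan is to obtain the explicit formula for $\mathcal{A}^{*}$ from the already-recorded factorization $\mathcal{A}=c_{1}\,\mathcal{W}_{m/2}^{1/2}\!\circ\mathcal{W}_{k/2}^{1}$ of the Abel transform, by combining the duality between $\mathcal{A}$ and $\mathcal{A}^{*}$ with the duality between the Weyl and the Riemann--Liouville fractional transforms. Throughout, write $M_{w}$ for multiplication by a weight $w$. First I would record the adjoint relation between $\mathcal{A}$ and $\mathcal{A}^{*}$: for a radial $F$ on $S$ and an even $g$ on $\mathbb{R}$, the integral $\int_{S}F\,\widetilde{g}\,dx$ may be computed either by first averaging $\widetilde{g}$ over spheres, which produces $\mathcal{A}^{*}g$ paired against $\delta\,dr$, or by first integrating along the horocyclic $N$--directions, which produces $\mathcal{A}F$ paired against $g$. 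Unravelling the definitions of the averaging projector, of $\widetilde{g}$ and of the Riemannian measure on $S$ yields
\[
\int_{0}^{+\infty}\!F(r)\,\mathcal{A}^{*}g(r)\,\delta(r)\,dr
=2\int_{0}^{+\infty}\!\mathcal{A}F(r)\,g(r)\,dr .
\]

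Second, I would establish the elementary adjointness $\int_{0}^{\infty}\mathcal{W}_{\mu}^{\tau}\phi\cdot\psi\,d(\cosh\tau r)=\int_{0}^{\infty}\phi\cdot\mathcal{R}_{\mu}^{\tau}\psi\,d(\cosh\tau r)$, which for $\operatorname{Re}\mu>0$ is merely Fubini in the variable $\cosh\tau r$, and in general follows after integrating by parts to absorb the regularizing derivatives $(-d/d(\cosh\tau s))^{M}$, checking that the boundary terms vanish. Substituting the factorization of $\mathcal{A}$ into the duality identity and transferring the two Weyl factors onto $g$ one at a time by this adjointness, while converting repeatedly between $dr$, $d(\cosh\tfrac r2)=\tfrac12\sinh\tfrac r2\,dr$ and $d(\cosh r)=\sinh r\,dr$, turns each $\mathcal{W}$ into an $\mathcal{R}$ and leaves behind exactly the weights $(\sinh\tfrac{\cdot}{2})^{-1}$ and $(\cosh\tfrac{\cdot}{2})^{-1}$ together with the front factor $(\sinh\tfrac r2)^{-m}(\sinh r)^{-(k-1)}$. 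Since $F$ is arbitrary I can then read off $\mathcal{A}^{*}g$; a short computation with $c_{1}$, $\delta$ and $Q=\tfrac m2+k$ turns the accumulated constant into precisely $c_{\ssf2}/2$, giving the stated formula for $\mathcal{A}^{*}$.

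Third, for the inverse I would invert the composition factor by factor: multiplication operators invert trivially, and each $\mathcal{R}_{\mu}^{\tau}$ is an isomorphism with $(\mathcal{R}_{\mu}^{\tau})^{-1}=\mathcal{R}_{-\mu}^{\tau}$ by the semigroup rule $\mathcal{R}_{\mu}^{\tau}\!\circ\mathcal{R}_{\nu}^{\tau}=\mathcal{R}_{\mu+\nu}^{\tau}$. Once the continuity of every factor on $C^{\infty}(\mathbb{R})_{\text{even}}$ is recorded, this already shows that $\mathcal{A}^{*}$ is a topological isomorphism. To identify the compact form of the inverse it then suffices to verify that the operator $\Phi=\tfrac1{c_{\ssf2}}\tfrac{d}{dr}\,\mathcal{R}_{-m/2}^{1/2}\mathcal{R}_{-k/2+1}^{1}\,M_{w}$, with $w=(\sinh\tfrac{\cdot}2)^{m}(\sinh\,\cdot\,)^{k-1}$, is a one--sided inverse of $\mathcal{A}^{*}$. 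Here the outer $M_{w}$ cancels the front weight; the relation $\mathcal{R}_{-k/2+1}^{1}\!\circ\mathcal{R}_{k/2}^{1}=\mathcal{R}_{1}^{1}$, the half--angle identity $\sinh r=2\sinh\tfrac r2\cosh\tfrac r2$ giving $\mathcal{R}_{1}^{1}\!\circ M_{(\cosh\frac{\cdot}{2})^{-1}}=4\,\mathcal{R}_{1}^{1/2}$, the $\tau=\tfrac12$ semigroup collapsing $\mathcal{R}_{-m/2}^{1/2}\mathcal{R}_{1}^{1/2}\mathcal{R}_{m/2}^{1/2}=\mathcal{R}_{1}^{1/2}$, and finally $\mathcal{R}_{1}^{1/2}\!\circ M_{(\sinh\frac{\cdot}{2})^{-1}}$ being $\tfrac12\int_{0}^{\,r}\!\cdots\,ds$ together with $\tfrac{d}{dr}$, reduce the whole composite to the identity. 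The two more precise formulas follow by specialization: when $k$ is even the negative--order transforms $\mathcal{R}_{-m/2}^{1/2}$ and $\mathcal{R}_{-k/2+1}^{1}$ are genuine iterated derivatives $(\tfrac{d}{d(\cosh\frac r2)})^{m/2}$ and $(\tfrac{d}{d(\cosh r)})^{k/2-1}$, whereas when $k$ is odd one factors $\mathcal{R}_{-k/2+1}^{1}=(\tfrac{d}{d(\cosh r)})^{(k-1)/2}\!\circ\mathcal{R}_{1/2}^{1}$ and spells out $\mathcal{R}_{1/2}^{1}$ as the half--order integral against $(\cosh r-\cosh s)^{-1/2}$.

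I expect the main obstacle to be the regularity bookkeeping rather than the algebra. The weights $(\sinh\tfrac{\cdot}2)^{-1}$ are singular at the origin, and the transforms $\mathcal{R}_{\mu}^{1/2}$, $\mathcal{R}_{\mu}^{1}$ mix even behaviour in $r$ with smoothness in the variables $\cosh\tfrac r2$ and $\cosh r$; one must therefore verify with care that every intermediate function remains in the correct space of even smooth functions and that no boundary contribution is dropped when the Weyl--Riemann--Liouville adjointness and the negative--order semigroup identities are invoked. Securing the topology — the continuity of $\mathcal{R}_{-\mu}^{\tau}$ for $\mu>0$ on these $C^{\infty}$ spaces — is the real content of the isomorphism assertion, the explicit inversion formulas being a by-product of the same computation.
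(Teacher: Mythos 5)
Your proposal is correct and follows essentially the same route as the paper: the authors likewise derive everything from the duality $\int_{\R}\A f\cdot g = \int_S f\cdot\A^*g$ combined with the Weyl/Riemann--Liouville adjointness $\int_0^\infty \mathcal{W}_\mu^\tau f\cdot g\,d(\cosh\tau r)=\int_0^\infty f\cdot\mathcal{R}_\mu^\tau g\,d(\cosh\tau r)$, and they single out exactly the point you flag as the real content, namely the mapping property $\mathcal{R}_{1/2}^{\,\tau}\colon r^{\ell}\,C^{\infty}(\R)_{\text{even}}\to r^{\ell+1}\,C^{\infty}(\R)_{\text{even}}$ which controls the singular weights at the origin and yields the topological isomorphism. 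Your constant bookkeeping ($c_1/\mathrm{const.}=c_2/2$) and the factor-by-factor inversion via the semigroup law both check out.
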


\begin{proof}
Everything follows from the duality formulae
\begin{equation*}\begin{gathered}
\int_{\ssf\R}dr\,
\A f(r)\,g(r)\,
=\int_{\ssf S}\ssf dx\,
f(x)\,\A^*\ssb g(x)\,,\\
\int_{\,0}^{+\infty}\!d\ssf(\cosh\tau r)\,
\mathcal{W}_\mu^{\ssf\tau}\ssb f(r)\,g(r)\,
=\int_{\,0}^{+\infty}\!d\ssf(\cosh\tau r)\,
f(r)\;\mathcal{R}_\mu^{\ssf\tau}\ssf g(r)\,,
\end{gathered}\end{equation*}
and from the properties of the Riemann-Liouville transforms,
in particular
\begin{equation*}
\mathcal{R}_{1/2}^{\,\tau}:\,
r^{\ssf\ell}\,
C^{\ssf\infty}\ssb(\R)_{\text{even}}
\overset\approx\longrightarrow\,
r^{\ssf\ell+1}\,
C^{\ssf\infty}\ssb(\R)_{\text{even}}
\end{equation*}
for every integer \ssf$\ell\!\ge\!-1$\ssf.
\end{proof}

\begin{remark}
In the degenerate case \ssf$m\!=\!0$,
we recover the classical expressions
for real hyperbolic spaces \ssf$H^n(\R)$\ssf:
\begin{align*}
\A f(r)
&=\tfrac{(2\ssf\pi)^{\frac{n-1}2}}{\Gamma(\frac{n-1}2)}
\int_{\,r}^{+\infty}\!d\ssf(\cosh s)\,
(\cosh s\ssb-\ssb\cosh r)^{(n-3)/2}
f(s)\,,\\
\A^*\ssb f\ssf(r)
&=\ssf c_{\ssf3}\,(\sinh r)^{-(n-2)}\ssb
\int_{\,0}^{\,r}\!ds\,
(\cosh r\ssb-\ssb\cosh s)^{\frac{n-3}2}f(s)\,,
\end{align*}
where \,$c_{\ssf3}\ssb
=\frac{\vphantom{\big|}2^{\frac{n-1}2}\ssf\Gamma(\frac n2)}
{\vphantom{\big|}\sqrt{\pi\ssf}\,\Gamma(\frac{n-1}2)}
=\frac{\vphantom{\big|}(n\ssf-\ssf2)\ssf!}
{\vphantom{\big|}2^{\frac{n-3}2}\ssf\Gamma(\frac{n-1}2)^2}$\ssf,
\begin{align*}
\A^{-1}\ssb f\ssf(r)
&=(2\ssf\pi)^{-\frac{n-1}2}\,
\bigl(-\tfrac d{d\ssf(\cosh r)}\bigr)^{\!\frac{n-1}2}f(r)\,,\\
(\A^*)^{-1}\ssb f\ssf(r)
&=\tfrac{2^{\frac{n-1}2}\ssf(\frac{n-1}2)\ssf!}
{\vphantom{\frac00}(n\ssf-\ssf1)\ssf!}\,
\tfrac d{dr}\,
\bigl(\ssf\tfrac d{d\ssf(\cosh r)}\bigr)^{\!\frac{n-3}2}\ssf
\bigl\{(\sinh r)^{\ssf n-2}\ssf f(r)\bigr\}
\end{align*}
if \,$n$ \ssf is odd and
\begin{align*}
\A^{-1}\ssb f\ssf(r)
&=\tfrac{\vphantom{\frac00}1}{2^{\frac{n-1}2}\ssf\pi^{\frac n2}}
\int_{\,r}^{+\infty}\hspace{-1mm}
\tfrac{ds}{\sqrt{\ssf\cosh s\,-\,\cosh r\ssf}}\,
\bigl(-\tfrac d{ds}\bigr)\ssf
\bigl(-\tfrac d{d\ssf(\cosh s)}\bigr)^{\!\frac n2-1}f(s)\,,\\
(\A^*)^{-1}\ssb f(r)
&=\tfrac{\vphantom{\frac00}1}{2^{\frac{n-1}2}\ssf(\frac n2-1)\ssf!}\,
\tfrac d{dr}\,
\bigl(\ssf\tfrac d{d\ssf(\cosh r)}\bigr)^{\!\frac n2-1}\!
\int_{\,0}^{\,r}\ssb
\tfrac{ds}{\sqrt{\ssf\cosh r\,-\,\cosh s\ssf}}\,
(\sinh s)^{n-1}\ssf f(s)
\end{align*}
if \,$n$ \ssf is even.
\end{remark}

\section{\'Asgeirsson's mean value theorem
and the shifted wave equation\\
on Damek--Ricci spaces}
\label{Section3}

\begin{theorem}\label{AsgeirssonTheorem1DR}
Assume that \,$U\hspace{-1mm}\in\!
C^{\ssf\infty}\ssb(S\!\times\!S)$
\ssf satisfies
\begin{equation}\label{AsgeirssonHypothesisDR}
\Delta_{\ssf x}\,U(x,y)=\Delta_{\ssf y}\,U(x,y)\,.
\end{equation}
Then
\begin{equation}\label{AsgeirssonMean1DR}
\int_{S(x,r)}\!dx^{\ssf\prime}
\int_{S(y,s)}\!dy^{\ssf\prime}\;
U(x^{\ssf\prime}\!,y^{\ssf\prime})\,=\ssf
\int_{S(x,s)}\!dx^{\ssf\prime}
\int_{S(y,r)}\!dy^{\ssf\prime}\;
U(x^{\ssf\prime}\!,y^{\ssf\prime})
\end{equation}
for every \,$x,y\!\in\!S$ \ssf and \,$r,s\!>\!0$\ssf.
\end{theorem}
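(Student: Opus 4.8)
The plan is to follow Helgason's scheme, reducing \eqref{AsgeirssonMean1DR} to a one–variable statement about the radial operator $\rad\Delta$. Fix $x,y\in S$. Since the weight $\delta(r)\,\delta(s)$ is symmetric in $(r,s)$, I would divide both sides of \eqref{AsgeirssonMean1DR} by it and work instead with the normalized double mean
\begin{equation*}
\Phi(r,s)=\tfrac1{\delta(r)\,\delta(s)}\int_{S(x,r)}\!dx'\int_{S(y,s)}\!dy'\;U(x'\!,y')\,,
\end{equation*}
so that the goal becomes the symmetry $\Phi(r,s)=\Phi(s,r)$. Because $U$ is smooth and $\rad\Delta$ has a regular singular point at the origin, $\Phi$ extends to a smooth function on $\R\times\R$ that is even in $r$ and in $s$ separately; in particular $\partial_r\Phi|_{r=0}=\partial_s\Phi|_{s=0}=0$, while $\Phi(r,0)$ and $\Phi(0,s)$ are the single means of $U(\cdot,y)$ and of $U(x,\cdot)$.

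The heart of the reduction is to show that $\Phi$ solves the symmetric equation $(\rad\Delta)_r\Phi=(\rad\Delta)_s\Phi$. Performing the inner average over $S(y,s)$ first and applying \eqref{RadialDR} in the $x$–variable, I would use that averaging over $S(y,s)$ acts on the $y'$–variable and hence commutes both with $\Delta_{x'}$ and with the averaging over $S(x,r)$; this expresses $(\rad\Delta)_r\Phi$ as the double mean of $\Delta_{x'}U$. Invoking the hypothesis \eqref{AsgeirssonHypothesisDR} to replace $\Delta_{x'}U$ by $\Delta_{y'}U$, and then \eqref{RadialDR} in the $y$–variable, yields the double mean of $\Delta_{y'}U$, which is exactly $(\rad\Delta)_s\Phi$. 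Thus $\Phi$ is a smooth solution of $(\rad\Delta)_r\Phi=(\rad\Delta)_s\Phi$ on the quadrant $r,s\ge0$, even in each variable.

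It remains to deduce $\Phi(r,s)=\Phi(s,r)$ from this symmetric radial equation together with the evenness conditions, and this is where the real work lies. The mechanism is that the only solutions of $(\rad\Delta)_rW=(\rad\Delta)_sW$ that are regular (even) in both variables are superpositions of the separated products $\varphi_\lambda(r)\,\varphi_\lambda(s)$, where $\varphi_\lambda$ is the spherical function with $\rad\Delta\,\varphi_\lambda=-(\lambda^2+\tfrac{Q^2}4)\varphi_\lambda$; since each such product is symmetric in $(r,s)$, so is $\Phi$. Concretely I would realize $\Phi$ as the generalized spherical translate of its Cauchy datum $\Phi(\cdot,0)$, whose kernel is the product formula for the functions $\varphi_\lambda$ and is symmetric in $(r,s)$ and compactly supported in the translation variable; symmetry of the kernel then gives symmetry of $\Phi$, and the compact support keeps the argument local, so that no decay of $U$ is needed. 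The main obstacle, accordingly, is precisely this last step: producing the symmetric translation kernel (equivalently, the product formula for Jacobi/spherical functions on $S$) and the uniqueness of the singular Cauchy problem for $\rad\Delta$. I stress that uniqueness alone is not enough, since $\Phi(r,s)$ and $\Phi(s,r)$ carry the different Cauchy data $\Phi(\cdot,0)$ and $\Phi(0,\cdot)$ on $\{s=0\}$; indeed the equality of these data is already the nontrivial case $s=0$ of \eqref{AsgeirssonMean1DR}.
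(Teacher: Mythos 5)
Your reduction coincides exactly with the part of the proof that the paper writes out: you form the normalized double spherical means, use the commutation \eqref{RadialDR} of $\Delta$ with spherical averaging together with the hypothesis \eqref{AsgeirssonHypothesisDR} to get $(\rad\Delta)_r\Phi=(\rad\Delta)_s\Phi$, and thereby reduce \eqref{AsgeirssonMean1DR} to the symmetry $\Phi(r,s)=\Phi(s,r)$. Where you diverge is the concluding symmetry step, which the paper simply refers to Helgason's treatment of $H^d(\R)$ in [H1, Section II.5.6]: you propose to represent $\Phi(r,s)$ as the generalized translate $T^s\bigl[\Phi(\cdot,0)\bigr](r)$ attached to the product formula for the spherical (Jacobi) functions of $S$. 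That logic is sound --- once uniqueness of the degenerate Cauchy problem identifies $\Phi(r,s)$ with $T^s\bigl[\Phi(\cdot,0)\bigr](r)$ for all $(r,s)$, the kernel symmetry $T^sf(r)=T^rf(s)$ gives $\Phi(r,s)=\Phi(s,r)$ outright, so the mismatch of Cauchy data on $\{s=0\}$ that you rightly worry about is bypassed rather than assumed (your preliminary remark about ``superpositions of $\varphi_\lambda(r)\,\varphi_\lambda(s)$'' should be read only as motivation, since no growth condition on $U$ licenses a spectral decomposition). The trade-off is that your route imports the Flensted--Jensen--Koornwinder product formula for Jacobi functions with parameters $\alpha=\frac{n-1}2\ge\beta=\frac{k-1}2\ge-\frac12$, together with uniqueness for the singular Cauchy problem --- both true but nontrivial --- whereas the step can be done elementarily in the spirit of [H1]: set $W(r,s)=\Phi(r,s)-\Phi(s,r)$, note that $W$ satisfies the same equation, is antisymmetric and vanishes on the diagonal, and kill it by an energy integral over the triangle $\{0\le s\le r\le R\}$, the boundary term on $\{s=0\}$ dropping out because $\delta(0)=0$; this degeneracy is precisely why the asymmetric data at $s=0$ are harmless. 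Either way the theorem follows, but the two ingredients you flag at the end are exactly what you would still have to prove or cite to make your version complete.
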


\noindent
The \textit{proof\/} is similar to
the real hyperbolic space case \cite[Section II.5.6]{H1}
once one has introduced the double spherical means
\begin{equation*}\label{DoubleSphericalMeansDR}
U_{\ssf x,y}^{\,\sharp,\sharp}\ssf(r,s)
=\ssf\tfrac1{\delta(r)}\int_{S(x,r)}\!dx^{\ssf\prime}\;
\tfrac1{\delta(s)}\int_{S(y,s)}\!dy^{\ssf\prime}\;
U(x^{\ssf\prime}\!,y^{\ssf\prime})
\end{equation*}
and transformed \eqref{AsgeirssonHypothesisDR} into
\begin{equation*}\label{AsgeirssonHypothesisRadialDR}
(\ssf\rad\Delta)_{\ssf r}\;
U_{\ssf x,y}^{\,\sharp,\sharp}\ssf(r,s)\,=\,
(\ssf\rad\Delta)_{\ssf s}\;
U_{\ssf x,y}^{\,\sharp,\sharp}\ssf(r,s)\,.
\end{equation*}
\vspace{-12mm}

\qed
\bigskip

\'Asgeirsson's Theorem is the following limit case
of Theorem \ref{AsgeirssonTheorem1DR},
which is obtained by dividing \eqref{AsgeirssonMean1DR} by $\delta(s)$
and by letting \,$s\ssb\to\ssb0$\ssf.

\begin{corollary}\label{AsgeirssonTheorem2DR}
Under the same assumptions,
\begin{equation*}\label{AsgeirssonMean2DR}
\int_{S(x,r)}\!dx^{\ssf\prime}\;
U(x^{\ssf\prime}\!,y)\,
=\,\int_{S(y,r)}\!dy^{\ssf\prime}\;
U(x,y^{\ssf\prime})\,.
\end{equation*}
\end{corollary}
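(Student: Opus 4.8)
The plan is to follow the reduction indicated just before the statement: start from the double mean value identity \eqref{AsgeirssonMean1DR} of Theorem \ref{AsgeirssonTheorem1DR}, divide both sides by the surface measure $\delta(s)$, and let $s\to0^+$. The guiding idea is that $\tfrac1{\delta(s)}$ times an integral over a sphere $S(p,s)$ is exactly the normalized spherical mean over that sphere, i.e.\ a probability average, which concentrates at the center $p$ as the radius shrinks to $0$.

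First I would treat the left-hand side of \eqref{AsgeirssonMean1DR}, divided by $\delta(s)$, writing it as
\[
\int_{S(x,r)}\!dx'\,\Bigl[\tfrac1{\delta(s)}\int_{S(y,s)}\!dy'\;U(x',y')\Bigr].
\]
The bracketed quantity is the spherical mean of $U(x',\cdot)$ at $y$ of radius $s$; since $U$ is smooth and the sphere $S(x,r)$ is compact, it converges to $U(x',y)$ uniformly in $x'$ as $s\to0$, so the whole expression tends to $\int_{S(x,r)}dx'\,U(x',y)$. Symmetrically, the right-hand side divided by $\delta(s)$ reads
\[
\tfrac1{\delta(s)}\int_{S(x,s)}\!dx'\,\Bigl[\int_{S(y,r)}\!dy'\;U(x',y')\Bigr],
\]
which is the spherical mean at $x$ of radius $s$ of the smooth function $x'\mapsto\int_{S(y,r)}dy'\,U(x',y')$; as $s\to0$ it converges to the value of that function at $x$, namely $\int_{S(y,r)}dy'\,U(x,y')$. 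Since the two sides of \eqref{AsgeirssonMean1DR} coincide for every $s>0$, so do their limits, which is precisely the claimed identity.

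The only point requiring genuine care -- and hence the main, though mild, obstacle -- is the justification that a normalized spherical mean converges to the value at its center and that the limit may be moved under the remaining outer integral. Both facts rest on the smoothness of $U$ together with the definition of $\delta(s)$ as the total mass of the surface measure on a sphere of radius $s$, which makes $\tfrac1{\delta(s)}\int_{S(p,s)}$ a probability average; uniform continuity on the relevant compact spheres then legitimates the interchange of limit and integration.
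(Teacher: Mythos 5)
Your proposal is correct and follows exactly the route the paper itself indicates: divide the identity \eqref{AsgeirssonMean1DR} by $\delta(s)$ and let $s\to0$, using that the normalized spherical mean of a smooth function converges to its value at the center. The paper states this in one line without spelling out the uniform-convergence justification, which you supply; there is no discrepancy in substance.
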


Given a solution \ssf$u\!\in\!
C^{\ssf\infty}\ssb(S\!\times\!\R)$
to the shifted wave equation
\begin{equation}\label{Dalembertian}
\partial_{\ssf t}^{\ssf2}u(x,t)
=\bigl(\Delta_{\ssf x}\!+\ssb\tfrac{Q^2}4\bigr)\ssf u(x,t)
\end{equation}
on \ssf$S$ with initial data \ssf$u(x,0)\!=\!f(x)$
and \ssf$\partial_{\ssf t}|_{\ssf t=0}
\,u(x,t)\!=\ssb0$,
\ssf set
\begin{equation}\label{U}\textstyle
U(x,y)=e^{\ssf\frac Q2\ssf t}\,u(x,t),
\end{equation}
where \ssf$t$ \ssf is
the \ssf$z$ \ssf coordinate of \ssf$y$.
Then \eqref{U} satisfies \eqref{AsgeirssonHypothesisDR},
according to \eqref{HorocyclicDR}.
By applying Corollary \ref{AsgeirssonTheorem2DR} to \eqref{U}
with \ssf$y\!=\!e$ \ssf and \ssf$r\!=\!|t|$,
we deduce that the dual Abel transform of
\ssf$t\ssb\mapsto\ssb u(x,t)$,
as defined in \eqref{DualAbelTransform},
is equal to the spherical mean
\ssf$\displaystyle f_x^{\ssf\sharp}(|t|)$
of the initial datum~\ssf$f$.
Hence
\begin{equation*}
u(x,t)=(\A^*)^{-1}\ssb\bigl(f_x^{\ssf\sharp}\bigr)(t)\ssf.
\end{equation*}
By integrating with respect to time, we obtain the solutions
\begin{equation*}
u(x,t)\,=\int_{\,0}^{\,t}ds\;(\A^*)^{-1}\ssb\bigl(g_x^\sharp\bigr)(s)
\end{equation*}
to \eqref{Dalembertian}
with initial data \ssf$u(x,0)\!=\!0$
and \ssf$\partial_{\ssf t}|_{\ssf t=0}
\,u(x,t)\!=\ssb g(x)$.
In conclusion, general solutions to the shifted wave equation
\begin{equation}\label{WaveEquationDR}\begin{cases}
\,\partial_{\ssf t}^{\ssf2}u(x,t)
=\bigl(\Delta_{\ssf x}\!+\ssb\tfrac{Q^2}4\bigr)\ssf u(x,t)\\
\,u(x,0)=f(x)\ssf,\,
\partial_{\ssf t}|_{\ssf t=0}\,u(x,t)=g(x)\\
\end{cases}\end{equation}
on \ssf$S$ are given by
\begin{equation*}\label{GeneralSolutionDR}
u(x,t)\ssf=\ssf(\A^*)^{-1}\ssb\bigl(f_x^{\ssf\sharp}\bigr)(t)\ssf
+\int_{\,0}^{\,t}ds\;(\A^*)^{-1}\ssb\bigl(g_x^\sharp\bigr)(s)\ssf.
\end{equation*}
By using Theorem \ref{TheoremDualAbelTransform},
we deduce the following explicit expressions.

\begin{theorem}\label{SolutionWaveEquationDR}
{\rm (a)} When \ssf$n$ \ssf is odd,
the solution to \eqref{WaveEquationDR} is given by
\begin{equation*}\begin{aligned}\label{SolutionOddDimension}
u(x,t)&=\ssf c_{\ssf4}\,
\tfrac\partial{\partial t}\,
\bigl(\ssf\tfrac\partial{\partial\ssf(\cosh\frac t2)}\bigr)^{\!\frac m2}\ssf
\bigl(\ssf\tfrac\partial{\partial\ssf(\cosh t)}\bigr)^{\!\frac k2-1}\,
\Bigl\{\ssf\tfrac1{\sinh t}\int_{S(x,\ssf|t|)}\!dy\hspace{.75mm}f(y)\Bigr\}\\
&\,+\ssf c_{\ssf4}\,
\bigl(\ssf\tfrac\partial{\partial\ssf(\cosh\frac t2)}\bigr)^{\!\frac m2}\ssf
\bigl(\ssf\tfrac\partial{\partial\ssf(\cosh t)}\bigr)^{\!\frac k2-1}\,
\Bigl\{\ssf\tfrac1{\sinh t}\int_{S(x,\ssf|t|)}\!dy\;g(y)\Bigr\}
\ssf,
\end{aligned}\end{equation*}
with \,$c_{\ssf4}\ssb
=2^{-\frac{3\hspace{.1mm}m+k}2-1}\ssf\pi^{-\frac{n-1}2}$\ssf.

\noindent
{\rm (b)} When \ssf$n$ \ssf is even,
the solution to \eqref{WaveEquationDR} is given by
\begin{align*}\label{SolutionEvenDimension}
u(x,t)&=\ssf c_{\ssf5}\,
\tfrac\partial{\partial|t|}\,
\bigl(\ssf\tfrac\partial{\partial\ssf(\cosh\frac t2)}\bigr)^{\!\frac m2}\ssf
\bigl(\ssf\tfrac\partial{\partial\ssf(\cosh t)}\bigr)^{\!\frac{k-1}2}\ssb
\int_{B(x,\ssf|t|)}\!dy\;
\tfrac{f(y)}{\sqrt{\ssf\cosh t\,-\,\cosh d\ssf(y,\ssf x)\ssf}}\\
&\,+\ssf c_{\ssf5}\,\operatorname{sign}(t)\,
\bigl(\ssf\tfrac\partial{\partial\ssf(\cosh\frac t2)}\bigr)^{\!\frac m2}\ssf
\bigl(\ssf\tfrac\partial{\partial\ssf(\cosh t)}\bigr)^{\!\frac{k-1}2}\ssb
\int_{B(x,\ssf|t|)}\!dy\;
\tfrac{g(y)}{\sqrt{\ssf\cosh t\,-\,\cosh d\ssf(y,\ssf x)\ssf}}\,,
\end{align*}
with \,$c_{\ssf5}\ssb
=2^{-\frac{3\hspace{.1mm}m+k}2-1}\ssf\pi^{-\frac n2}$\ssf.
\end{theorem}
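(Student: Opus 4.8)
The main work is already done: Ásgeirsson's mean value theorem (Corollary \ref{AsgeirssonTheorem2DR}) combined with the explicit inverse dual Abel transform (Theorem \ref{TheoremDualAbelTransform}) has reduced everything to the single formula
\[
u(x,t)=(\A^*)^{-1}\bigl(f_x^{\ssf\sharp}\bigr)(t)
+\int_{\,0}^{\,t}ds\;(\A^*)^{-1}\bigl(g_x^\sharp\bigr)(s)\,.
\]
The plan is simply to substitute the two explicit expressions for $(\A^*)^{-1}$ from Theorem \ref{TheoremDualAbelTransform} and to simplify. The one recurring step is to absorb the factor $\tfrac1{\delta(r)}$ carried by the spherical mean $f_x^{\ssf\sharp}(r)=\tfrac1{\delta(r)}\int_{S(x,r)}dy\,f(y)$ into the $\sinh$-weights produced by $(\A^*)^{-1}$, using $\delta(r)=2^{\ssf m+1}\ssf\pi^{\frac n2}\ssf\Gamma(\tfrac n2)^{-1}(\sinh\tfrac r2)^m(\sinh r)^k$.

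First I would treat the odd case ($k$ even). Multiplying $f_x^{\ssf\sharp}(r)$ by the weight $(\sinh\tfrac r2)^m(\sinh r)^{k-1}$ occurring in $(\A^*)^{-1}$ cancels $\delta(r)$ down to a single power of $\sinh r$:
\[
(\sinh\tfrac r2)^m(\sinh r)^{k-1}f_x^{\ssf\sharp}(r)
=\tfrac{\Gamma(n/2)}{2^{\ssf m+1}\ssf\pi^{n/2}}\;
\tfrac1{\sinh r}\int_{S(x,r)}dy\;f(y)\,.
\]
Inserting this into the odd-dimensional formula of Theorem \ref{TheoremDualAbelTransform} and gathering the scalar $\tfrac1{c_{\ssf2}}\cdot\tfrac{\Gamma(n/2)}{2^{\ssf m+1}\pi^{n/2}}$, I would verify that it collapses, via $n=m+k+1$, to exactly $c_{\ssf4}=2^{-\frac{3\ssf m+k}2-1}\pi^{-\frac{n-1}2}$, which gives the $f$-term of part (a). For the $g$-term, the outermost derivative $\tfrac\partial{\partial t}$ in $(\A^*)^{-1}$ makes the time integral telescope by the fundamental theorem of calculus, the boundary contribution at $s=0$ vanishing because $\tfrac1{\sinh s}\int_{S(x,s)}dy\,g(y)$, and hence its image under the intervening derivative operators, is odd and smooth in $s$.

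The even case ($k$ odd) is parallel, with two additional points. The weight $(\sinh\tfrac s2)^m(\sinh s)^k$ now cancels $\delta(s)$ completely, leaving the constant $\tfrac{\Gamma(n/2)}{2^{\ssf m+1}\pi^{n/2}}$, which together with $\tfrac1{c_{\ssf2}\sqrt\pi}$ produces $c_{\ssf5}=2^{-\frac{3\ssf m+k}2-1}\pi^{-\frac n2}$. Moreover the single integral issuing from the Riemann--Liouville transform must be read, by slicing the ball into the spheres on which $d(y,x)=s$, as a ball integral:
\[
\int_{\,0}^{\,r}\tfrac{ds}{\sqrt{\ssf\cosh r-\cosh s\ssf}}\int_{S(x,s)}dy\;\phi(y)
=\int_{B(x,r)}dy\;\tfrac{\phi(y)}{\sqrt{\ssf\cosh r-\cosh d(y,x)\ssf}}\,.
\]
The $g$-term is again obtained by integrating the outermost derivative, the boundary term at $s=0$ vanishing for the same reason.

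It remains to settle the dependence on the sign of $t$. Both formulas are first proved for $t>0$ and then extended to all $t$ through the parities imposed by the equation: the contribution of $f$ is even and that of $g$ odd in $t$, the latter being an integral from $0$ of an even function. This is precisely the source of the derivative $\tfrac\partial{\partial|t|}$ and of the factor $\operatorname{sign}(t)$ in part (b), where the ball integral is naturally a function of $|t|$, whereas in part (a) the oddness is already carried by the factor $\tfrac1{\sinh t}$. I expect no conceptual obstacle here; the only steps that demand genuine care are the collapse of the stray powers of $2$, $\pi$ and $\Gamma(\tfrac n2)$ to $c_{\ssf4}$ and $c_{\ssf5}$, and the parity bookkeeping, which in the even case is a little delicate because the square-root weight renders the relevant brackets odd in $t$ in a non-manifest way.
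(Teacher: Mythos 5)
Your proposal is correct and follows exactly the route the paper takes (which it leaves implicit, simply invoking Theorem \ref{TheoremDualAbelTransform} after deriving $u(x,t)=(\A^*)^{-1}(f_x^{\ssf\sharp})(t)+\int_0^t ds\,(\A^*)^{-1}(g_x^{\ssf\sharp})(s)$): substitute the explicit inverse dual Abel transform, cancel $\delta(r)$ against the $\sinh$-weights, telescope the time integral against the outer derivative, and rewrite the Riemann--Liouville integral as a ball integral. The constant bookkeeping you outline does collapse to $c_{\ssf4}$ and $c_{\ssf5}$ as claimed, and the parity argument for the vanishing boundary term at $s=0$ is sound.
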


\begin{remark}
These formulae extend to the degenerate case \ssf$m\!=\!0$,
which corresponds to real hyperbolic spaces $H^n(\R)$\,{\rm:}

\noindent
{\rm (a)} \,$n$ \ssf odd\,{\rm:}
\begin{align*}
u(t,x)&=\ssf c_{\ssf6}\,\tfrac{\partial}{\partial t}\,
\bigl(\ssf\tfrac{\partial}{\partial\ssf(\cosh t)}\bigr)^{\!\frac{n-3}2}\,
\Bigl\{\ssf\tfrac1{\sinh t}\int_{S(x,\ssf|t|)}\!dy\hspace{.75mm}f(y)\Bigr\}\\
&\,+\ssf c_{\ssf6}\,
\bigl(\ssf\tfrac{\partial}{\partial\ssf(\cosh t)}\bigr)^{\!\frac{n-3}2}\ssf
\Bigl\{\ssf\tfrac1{\sinh t}\int_{S(x,\ssf|t|)}\!dy\;g(y)\Bigr\}\ssf,
\end{align*}
with \,$c_{\ssf6}\ssb=2^{-\frac{n+1}2}\ssf\pi^{-\frac{n-1}2}$\ssf.

\noindent
{\rm (b)} \,$n$ \ssf even\,{\rm:}
\begin{align*}
u(t,x)&=\ssf c_{\ssf7}\,\tfrac{\partial}{\partial|t|}\,
\bigl(\ssf\tfrac{\partial}{\partial\ssf(\cosh t)}\bigr)^{\!\frac n2-1}
\int_{B(x,\ssf|t|)}\!dy\;
\tfrac{f(y)}{\sqrt{\ssf\cosh t\,-\,\cosh d\ssf(y,\ssf x)\ssf}}\\
&\,+\ssf c_{\ssf7}\,\operatorname{sign}(t)\,
\bigl(\ssf\tfrac{\partial}{\partial\ssf(\cosh t)}\bigr)^{\!\frac n2-1}\!
\int_{B(x,\ssf|t|)}\!dy\;
\tfrac{g(y)}{\sqrt{\ssf\cosh t\,-\,\cosh d\ssf(y,\ssf x)\ssf}}\,,
\end{align*}
with \,$c_{\ssf7}\ssb=2^{-\frac{n+1}2}\ssf\pi^{-\frac n2}$\ssf.
\end{remark}

As an application,
let us investigate the propagation of solutions \ssf$u$
\ssf to the shifted wave equation \eqref{WaveEquationDR}
with initial data \ssf$f\ssb,\ssf g$
\ssf supported in a ball \,$B(x_{\ssf0},R\ssf)$.
The following two statements are immediate consequences
of Theorem \ref{SolutionWaveEquationDR}.
Firstly, waves propagate at unit speed.

\begin{corollary}\label{FinitePropagationSpeedDR}
Under the above assumptions,
\begin{equation*}
\operatorname{supp}\ssf u
\subset\{\ssf(x,t)\!\in\!S
\mid d\ssf(x,x_{\ssf0})\!\le\!|t|\!+\ssb R\,\}\,.
\end{equation*}
\end{corollary}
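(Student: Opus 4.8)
The plan is to read the conclusion straight off the explicit solution formulas of Theorem~\ref{SolutionWaveEquationDR}, exploiting the fact that every differential operator occurring there acts only in the time variable~$t$, at a fixed spatial point~$x$. The key observation is that such operators are \emph{local} in~$t$: if a function of~$t$ vanishes on an open interval, then so do all of its $t$-derivatives on that interval. Thus, restricted to a fixed $x$-slice, the support of~$u$ in the $t$-direction cannot spread under the differentiations, and the whole statement reduces to a triangle-inequality estimate on the domains of integration.

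First I would fix a point $x\!\in\!S$ with $d(x,x_{\ssf0})\!>\!R$ and examine, as functions of~$s$, the spherical average $\int_{S(x,|s|)}dy\,f(y)$ (in the odd case) and the ball integral $\int_{B(x,|s|)}dy\,\tfrac{f(y)}{\sqrt{\cosh s-\cosh d(y,x)}}$ (in the even case), together with their analogues for~$g$. For every $s$ with $|s|<d(x,x_{\ssf0})-R$ and every $y$ in the relevant domain one has $d(x,y)\le|s|$, hence
\[
d(y,x_{\ssf0})\ge d(x,x_{\ssf0})-d(x,y)>d(x,x_{\ssf0})-\bigl(d(x,x_{\ssf0})-R\bigr)=R,
\]
so that $y\notin B(x_{\ssf0},R)\supseteq\supp f\cup\supp g$. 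Consequently all these integrals vanish identically on the open interval $|s|<d(x,x_{\ssf0})-R$.

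Next I would invoke the explicit formulas of Theorem~\ref{SolutionWaveEquationDR}, which express $u(x,s)$ as a combination of $s$-derivatives -- namely $\partial_s$ and powers of $\tfrac{\partial}{\partial(\cosh\frac s2)}$ and $\tfrac{\partial}{\partial(\cosh s)}$ -- applied to precisely these integrals. Since the integrals vanish on the open interval $|s|<d(x,x_{\ssf0})-R$, so do all their $s$-derivatives there, whence $u(x,s)=0$ for every $|s|<d(x,x_{\ssf0})-R$. In particular $u(x,t)=0$ whenever $d(x,x_{\ssf0})>|t|+R$. As $(x,t)$ was an arbitrary point of the open set $\{\,d(x,x_{\ssf0})>|t|+R\,\}$, the support of~$u$ is contained in its closed complement $\{\,d(x,x_{\ssf0})\le|t|+R\,\}$, which is the asserted inclusion.

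The only point requiring a little care -- and the step I would single out as the main (mild) obstacle -- is the apparent singularity of the factor $\tfrac1{\sinh s}$ in the odd-dimensional formula and of $\tfrac1{\sqrt{\cosh s-\cosh d(y,x)}}$ in the even-dimensional one near $s=0$, since the interval $|s|<d(x,x_{\ssf0})-R$ contains the origin. This is harmless here: when $d(x,x_{\ssf0})>R$ the spherical mean $f_x^{\ssf\sharp}(|s|)$ already vanishes identically on that interval, so the product $\tfrac1{\sinh s}\int_{S(x,|s|)}dy\,f(y)$ and its companions are genuinely zero (and smooth) rather than indeterminate. Hence the vanishing, and with it the finite-propagation-speed estimate, holds without further qualification.
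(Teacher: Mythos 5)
Your argument is correct and is exactly the reasoning the paper has in mind: the authors state Corollary~\ref{FinitePropagationSpeedDR} as an ``immediate consequence'' of Theorem~\ref{SolutionWaveEquationDR}, and what you have written is precisely the triangle-inequality-plus-locality-in-$t$ argument that makes it immediate. Your extra remark about the apparent singularities at $t=0$ is a sensible (if minor) point of care that the paper leaves implicit.
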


Secondly, Huygens' principle holds in odd dimension, as in the Euclidean setting.
This phenomenon was already observed in \cite{N}.

\begin{corollary}\label{HuygensPrincipleDR}
Assume that \ssf$n$ \ssf is odd.
Then, under the above assumptions,
\begin{equation*}
\operatorname{supp}\ssf u\subset\{\ssf(x,t)\!\in\!S\mid
|t|\!-\ssb R\ssb\le\ssb d\ssf(x,x_{\ssf0})\!\le\!|t|\!+\ssb R\,\}\,.
\end{equation*}
\end{corollary}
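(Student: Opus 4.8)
The plan is to read Huygens' principle directly off the explicit odd--dimensional formula of Theorem \ref{SolutionWaveEquationDR}(a), exploiting the fact that this representation is \emph{local} in the radial variable. The outer inclusion $d(x,x_0)\le|t|+R$ is already furnished by finite propagation speed (Corollary \ref{FinitePropagationSpeedDR}), so the entire task reduces to the inner bound, i.e.\ to showing that $u(x,t)=0$ whenever $d(x,x_0)<|t|-R$.

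First I would isolate the structural fact that drives the argument. Since $n$ is odd, both $m$ and $k$ are even, so the exponents $\tfrac m2$ and $\tfrac k2-1$ are integers and the operators $\bigl(\tfrac\partial{\partial(\cosh\frac t2)}\bigr)^{m/2}$, $\bigl(\tfrac\partial{\partial(\cosh t)}\bigr)^{k/2-1}$ and $\tfrac\partial{\partial t}$ are ordinary differential operators in $t$: no integration in the time variable occurs. Writing $F_x(r)=\int_{S(x,r)}dy\,f(y)$ and $G_x(r)=\int_{S(x,r)}dy\,g(y)$, it follows that $u(x,t)$ depends on the initial data only through the germs at $r=|t|$ of the smooth functions $r\mapsto(\sinh r)^{-1}F_x(r)$ and $r\mapsto(\sinh r)^{-1}G_x(r)$. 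In particular, if $F_x$ and $G_x$ vanish on an entire neighbourhood of $r=|t|$, then all their $t$--derivatives vanish there and hence $u(x,t)=0$.

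Next I would perform the elementary support analysis. As $\supp f,\supp g\subset B(x_0,R)$, the spherical mean $F_x(r)$ can be nonzero only when $S(x,r)$ meets $B(x_0,R)$, that is, only when some $y$ satisfies $d(x,y)=r$ and $d(y,x_0)\le R$; the triangle inequality then forces $d(x,x_0)-R\le r\le d(x,x_0)+R$, and likewise for $G_x$. Hence, if $d(x,x_0)<|t|-R$, equivalently $|t|>d(x,x_0)+R$, both $F_x$ and $G_x$ vanish identically on the interval $\bigl(d(x,x_0)+R,+\infty\bigr)$, which is a neighbourhood of $r=|t|$. By the preceding paragraph $u(x,t)=0$, which is the desired inner bound; combined with Corollary \ref{FinitePropagationSpeedDR} this confines $\supp u$ to the announced shell.

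The one point requiring genuine care --- and what I view as the crux --- is the locality assertion of the second paragraph: one must verify that in odd dimension none of the fractional transforms composing $(\A^*)^{-1}$ degenerates into a nonlocal integral operator, in contrast to the even case, where integration over the full ball $B(x,|t|)$ enters and the inner bound is indeed false. Once the representation of $u(x,t)$ is seen to be purely differential in $t$, and thus to depend only on an arbitrarily thin radial shell around $S(x,|t|)$, the conclusion is immediate and no quantitative estimates are needed.
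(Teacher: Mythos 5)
Your proposal is correct and is exactly the argument the paper intends: the corollary is stated as an immediate consequence of Theorem \ref{SolutionWaveEquationDR}(a), and your write-up simply makes explicit the key observation that in odd dimension the solution operator is a purely differential (hence local) operator in $t$ applied to the spherical means, combined with the triangle-inequality support analysis and finite propagation speed for the outer bound.
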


In even dimension, \,$u(x,t)$ may not vanish
when \ssf$d\ssf(x,x_{\ssf0})\!<\!|t|\!-\ssb R$\ssf,
but it tends asymptotically to \ssf$0$.
This phenomenon was observed in several settings,
for instance on Euclidean spaces in \cite{S},
on Riemannian symmetric spaces of the noncompact type \cite{BOS},
on Damek--Ricci spaces \cite{AD},
for Ch\'ebli--Trim\`eche hypergroups \cite{EY}, ...
Our next result differs from \cite{BOS, EY, AD} in two ways.
On one hand, we use explicit expressions instead of the Fourier transform.
On the other hand, we aim at energy estimates as in \cite{S},
which are arguably more appropriate than pointwise estimates.
Recall indeed that the total energy
\begin{equation}\label{TotalEnergyDR}
\E(t)=\ssf\K(t)+\P(t)
\end{equation}
is time independent, where
\begin{equation*}\label{KineticEnergyDR}
\K(t)=\tfrac12\int_{\ssf S}\,dx\;|\partial_{\ssf t}u(x,t)|^2
\end{equation*}
is the kinetic energy and
\begin{align*}\label{PotentialEnergyDR}
\P(t)
&=\tfrac12\int_{\ssf S}\,dx\;
\bigl(-\Delta_{\ssf x}\!-\!\tfrac{Q^2}4\bigr)
u(x,t)\;\overline{u(x,t)}\\
&=\tfrac12\int_{\ssf S}\,dx\;
\bigl\{\ssf|\ssf\nabla_{\ssb x}\ssf u(x,t)|^2\!
-\ssb\tfrac{Q^2}4\ssf|\ssf u(x,t)|^2\bigr\}
\end{align*}
the potential energy.
By the way, let us mention that
the equipartition of \eqref{TotalEnergyDR}
into kinetic and potential energies
was investigated in \cite{BOS}
and in the subsequent works \cite{EY, A, AD}
(see also \cite[Section V.5.5]{H2}
and the references cited therein).

\begin{lemma}\label{PointwiseEstimateDR}
Let \ssf$u$ \ssf be
a solution to \eqref{WaveEquationDR}
\ssf with smooth initial data
\ssf$f\!,\ssf g$
\ssf supported in a ball \,$B(x_{\ssf0},R\ssf)$.
Then
\begin{equation*}
u(x,t)\ssf,\,\partial_{\ssf t}u(x,t)\ssf,\,\nabla_{\ssb x}\ssf u(x,t)
\hspace{2mm}\text{are}\hspace{2mm}\text{\rm O}\bigl(
e^{\hspace{.2mm}-(Q/2)\ssf|t|}\ssf\bigr)
\end{equation*}
for every \ssf$x\!\in\!S$ \ssf and \,$t\!\in\!\R$ \ssf
such that \,$d\ssf(x,x_{\ssf0})\!\le\ssb|t|\!-\ssb R\!-\!1$\ssf.
\end{lemma}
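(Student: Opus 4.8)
The plan is to read the decay directly off the explicit solution formulae of Theorem~\ref{SolutionWaveEquationDR}, distinguishing the parities of $n$ and exploiting the gap built into the hypothesis $d(x,x_{\ssf0})\le|t|-R-1$. By the time reversal symmetry $t\mapsto-t$ of \eqref{WaveEquationDR} I may assume $t>0$ throughout. In odd dimension there is nothing to prove: by Corollary~\ref{HuygensPrincipleDR} the support of $u$ avoids the open set $\{d(x,x_{\ssf0})<t-R\}$, which contains a full neighbourhood of every point with $d(x,x_{\ssf0})\le t-R-1$, so $u$ together with $\partial_{\ssf t}u$ and $\nabla_{\ssb x}u$ vanishes identically there and the bound is trivial.

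The substance is the even case, where I would use Theorem~\ref{SolutionWaveEquationDR}(b). The key geometric observation is that the hypothesis forces $\operatorname{supp}f,\operatorname{supp}g\subset B(x_{\ssf0},R)$ to lie strictly inside $B(x,t)$: for $y\in B(x_{\ssf0},R)$ one has $d(y,x)\le d(x,x_{\ssf0})+R\le t-1$. Hence the domain of integration may be replaced by the fixed set $\operatorname{supp}f$ (resp.\ $\operatorname{supp}g$), independent of $x$ and $t$ in the region considered; no boundary terms arise and I may differentiate freely under the integral sign in both $x$ and $t$. Moreover, uniformly for $0\le r\le t-1$ and $t$ large, $\cosh t-\cosh r\asymp e^{\ssf t}$, the lower bound $\cosh t-\cosh r\ge\cosh t-\cosh(t-1)=2\sinh(t-\tfrac12)\sinh\tfrac12$ being exactly where the extra $-1$ in the hypothesis is used (values of $t$ in a bounded range are handled by continuity).

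It then remains to estimate the kernel obtained by applying $\partial_{\ssf|t|}\ssf(\partial_{\cosh(t/2)})^{m/2}(\partial_{\cosh t})^{(k-1)/2}$ to $(\cosh t-\cosh r)^{-1/2}$, with $r=d(y,x)\in[0,t-1]$. This is elementary bookkeeping: each factor $\partial_{\cosh t}$ lowers the exponent by one and contributes $\asymp e^{-t}$; using $\partial_{\cosh(t/2)}=4\cosh(\tfrac t2)\,\partial_{\cosh t}$ on functions of $\cosh t$ one sees that each $\partial_{\cosh(t/2)}$ contributes $\cosh(\tfrac t2)\,e^{-t}\asymp e^{-t/2}$; and $\partial_{\ssf|t|}$ leaves the rate unchanged. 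Since the starting factor is $\asymp e^{-t/2}$, the total rate is $e^{-t/2}\,e^{-(k-1)t/2}\,e^{-mt/4}=e^{-(m/4+k/2)t}=e^{-(Q/2)t}$. Integrating against the bounded, compactly supported data over the fixed domain preserves this rate, so each of the two terms in $u$ is $\operatorname{O}(e^{-(Q/2)|t|})$. The estimate for $\partial_{\ssf t}u$ is identical, one more $t$-derivative not affecting the rate. For $\nabla_{\ssb x}u$ the only $x$-dependence is through $r=d(y,x)$, and differentiating raises the exponent by one while bringing down $\sinh r\,\nabla_{\ssb x}r$; since $|\nabla_{\ssb x}r|\le1$ and $\sinh r\,(\cosh t-\cosh r)^{-1}=\operatorname{O}(1)$ uniformly on the region, the rate $e^{-(Q/2)|t|}$ is again preserved.

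I expect the main obstacle to be the careful bookkeeping of the last paragraph: controlling \emph{all} the terms produced by $(\partial_{\cosh(t/2)})^{m/2}$ uniformly in $r\in[0,t-1]$, and checking that the subdominant terms (with fewer powers of $\cosh(\tfrac t2)$) decay at least as fast as the leading one. The clean way to organize this is via Fa\`a di Bruno applied to $h(c)=2c^{\ssf2}-1-\cosh r$ with $c=\cosh(\tfrac t2)$: since $h''=4$ is constant and $h^{(\ge3)}=0$, every resulting term has the form $c^{\ssf i}(\cosh t-\cosh r)^{-\nu-i-j}$ with $i+2j$ equal to the number of $\partial_{\cosh(t/2)}$ applied. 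The estimate $c^{\ssf i}(\cosh t-\cosh r)^{-i-j}\asymp e^{-(i/2+j)t}$ then shows all such terms share the common rate, because $i/2+j=(i+2j)/2$ is constant along $i+2j=\const$.
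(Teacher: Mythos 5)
Your proposal is correct and follows essentially the same route as the paper's proof: replace the domain of integration by the fixed support of the data, apply the operator $\bigl(\tfrac{\partial}{\partial(\cosh\frac t2)}\bigr)^{m/2}\bigl(\tfrac{\partial}{\partial(\cosh t)}\bigr)^{(k-1)/2}$ to $\{\cosh t-\cosh d(y,x)\}^{-1/2}$, and use $\cosh t-\cosh d(y,x)\asymp e^{\ssf t}$ on the region $d(y,x)\le t-1$ to extract the rate $e^{-(Q/2)t}$, with the same treatment of $\partial_{\ssf t}$ and $\nabla_{\ssb x}$. Your Fa\`a di Bruno bookkeeping reproduces exactly the paper's expansion $\sum_j a_j\,(\cosh\tfrac t2)^{m/2-2j}\{\cosh t-\cosh d(y,x)\}^{-(m+k)/2+j}$, and your observation that the odd-dimensional case is vacuous by Huygens' principle is a harmless (correct) addition the paper leaves implicit.
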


\begin{proof}
Assume \,$t\!>\!0$ \ssf and consider the second part
\begin{equation}\begin{aligned}\label{Solution2WaveEquationDR}
v(x,t)&=
\bigl(\ssf\tfrac\partial{\partial\ssf(\cosh\frac t2)}\bigr)^{\!\frac m2}\ssf
\bigl(\ssf\tfrac\partial{\partial\ssf(\cosh t)}\bigr)^{\!\frac{k-1}2}
\int_{B(x,t)}\!dy\;
\tfrac{g(y)}{\sqrt{\ssf\cosh t\,-\,\cosh d\ssf(y,\ssf x)\ssf}}
\end{aligned}\end{equation}
of the solution \ssf$u(x,t)$
in Theorem \ref{SolutionWaveEquationDR}.b.
The case \,$t\!<\!0$ \ssf and the first part are handled similarly.
As \,$B(x_{\ssf0},R\ssf)\!\subset\ssb B(x,t)$,
we have
\begin{equation*}
\int_{B(x,t)}\!dy\;
\tfrac{g(y)}{\sqrt{\ssf\cosh t\,-\,\cosh d\ssf(y,\ssf x)\ssf}}\,
=\int_{B(x_{\ssf0},R\ssf)}\!dy\;
\tfrac{g(y)}{\sqrt{\ssf\cosh t\,-\,\cosh d\ssf(y,\ssf x)\ssf}}
\end{equation*}
and thus it remains to apply the differential operator
\begin{equation*}
D_{\ssf t}=
\bigl(\ssf\tfrac\partial{\partial\ssf(\cosh\frac t2)}\bigr)^{\!\frac m2}\ssf
\bigl(\ssf\tfrac\partial{\partial\ssf(\cosh t)}\bigr)^{\!\frac{k-1}2}
\end{equation*}
to \,$\bigl\{\cosh t\ssb-\cosh d\ssf(y,x)\bigr\}^{-\frac12}$\ssf.
Firstly
\begin{equation*}
\bigl(\ssf\tfrac\partial{\partial\ssf(\cosh t)}\bigr)^{\!\frac{k-1}2}\ssf
\bigl\{\cosh t\ssb-\cosh d\ssf(y,x)\bigr\}^{-\frac12}
=\,\operatorname{const.}\hspace{.5mm}
\bigl\{\cosh t\ssb-\cosh d\ssf(y,x)\bigr\}^{-\frac k2}
\end{equation*}
and secondly
\begin{align*}
&\bigl(\ssf\tfrac\partial{\partial\ssf(\cosh\frac t2)}\bigr)^{\!\frac m2}\ssf
\bigl\{\cosh t\ssb-\cosh d\ssf(y,x)\bigr\}^{-\frac k2}\\
&=\ssf\sum\nolimits_{\ssf0\ssf\le\ssf j\ssf\le\frac m4}
a_j\,\bigl(\cosh\tfrac t2\bigr)^{\!\frac m2-2j}\,
\bigl\{\cosh t\ssb-\cosh d\ssf(y,x)\bigr\}^{-\frac{m+k}2+j}\ssf,
\end{align*}
for some constants \ssf$a_j$.
As
\begin{equation*}
\cosh t\ssb-\cosh d\ssf(y,x)
=2\,\sinh\tfrac{t\ssf+\ssf d\ssf(y,\ssf x)}2\,
\sinh\tfrac{t\ssf-\ssf d\ssf(y,\ssf x)}2
\asymp e^{\;t},
\end{equation*}
we conclude that
\,$D_{\ssf t}\ssf\bigl\{\ssf\cosh t\ssb-\cosh d\ssf(y,x)\bigr\}^{-\frac12}$
\ssf and hence \ssf$v(x,t)$ are \,$\text{O}\bigl(e^{-\frac Q2\ssf t}\bigr)$.
The de\-ri\-va\-ti\-ves \ssf$\partial_{\ssf t}\ssf v(x,t)$
and \ssf$\nabla_{\ssb x}\ssf v(x,t)$
are estimated similarly.
As far as \ssf$\nabla_{\ssb x}v(x,t)$ is concerned,
we use in addition that
\begin{equation*}\textstyle
\sinh d\ssf(y,x)=\text{O}\ssf(e^{\;t})
\quad\text{and}\quad
|\ssf\nabla_{\ssb x}\ssf d\ssf(y,x)|
\ssb\le\ssb1.
\end{equation*}
This concludes the proof of Lemma \ref{PointwiseEstimateDR}.
\end{proof}

\begin{theorem}\label{AsymptoticHuygensDR}
Let \ssf$u$ \ssf be a solution
to \eqref{WaveEquationDR} with initial data
$f\ssb,g\!\in\ssb C_c^{\ssf\infty}(S)$
and let \ssf$R\ssb=\ssb R(t)$
be a positive function such that
\begin{equation*}\begin{cases}
\,R(t)\to+\infty\\
\,R(t)=\ssf\text{\rm o}\ssf(|t|)\\
\end{cases}
\quad\text{as \;}t\ssb\to\ssb\pm\infty\ssf.
\end{equation*}
Then
\begin{equation*}
\int_{\ssf d\ssf(x,\ssf e)\ssf
<\ssf|t|-\ssf R(t)}dx\;
\bigl\{\,|\ssf u(x,t)|^2+\ssf
|\ssf\nabla_{\ssb x}\ssf u(x,t)|^2
+\ssf|\partial_{\ssf t}u(x,t)|^2\ssf\bigr\}
\end{equation*}
tend to \ssf$0$ \ssf as \,$t\!\to\!\pm\infty$.
In other words, the energy of \,$u$
\ssf concentrates asymptotically inside the spherical shell
\begin{equation*}
\{\,x\!\in\!S\mid|t|\!-\!R(t)\ssb\le d\ssf(x,e)\ssb\le\ssb|t|\!+\!R(t)\ssf\}\,.
\end{equation*}
\end{theorem}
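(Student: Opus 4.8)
The plan is to bound the energy integral over the inner region $\{\,d(x,e)<|t|-R(t)\,\}$ directly, by playing the exponential pointwise decay of Lemma~\ref{PointwiseEstimateDR} against the exponential growth of the volume of balls in $S$; the reformulation as concentration of the energy inside the shell will then follow by combining this with the finite propagation speed of Corollary~\ref{FinitePropagationSpeedDR}.

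First I would fix $R_0>0$ so that $\operatorname{supp}f\cup\operatorname{supp}g\subset B(e,R_0)$, which is possible since $f,g\in C_c^{\ssf\infty}(S)$. Because $R(t)\to+\infty$, for $|t|$ large we have $R(t)\ge R_0+1$, hence the inner region is contained in $\{\,d(x,e)\le|t|-R_0-1\,\}$, and Lemma~\ref{PointwiseEstimateDR} (applied with $x_0=e$) gives the uniform bound
\begin{equation*}
|\ssf u(x,t)|^2+|\ssf\nabla_{\ssb x}\ssf u(x,t)|^2+|\ssf\partial_{\ssf t}u(x,t)|^2
=\mathrm{O}\bigl(e^{-Q\ssf|t|}\bigr)
\end{equation*}
throughout that region.

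Next I would estimate the volume. The explicit formula for $\delta$ yields $\delta(r)\asymp e^{\ssf Qr}$ as $r\to+\infty$, since $\bigl(\cosh\tfrac r2\bigr)^k\bigl(\sinh\tfrac r2\bigr)^{n-1}\asymp e^{(k+n-1)r/2}=e^{\ssf Qr}$ (recall $n=m+k+1$ and $Q=\tfrac m2+k$). Consequently $\int_0^\rho\delta(r)\,dr=\mathrm{O}(e^{\ssf Q\rho})$, so the inner region has volume $\mathrm{O}\bigl(e^{\ssf Q(|t|-R(t))}\bigr)$. Multiplying the uniform pointwise bound by this volume, the integral over $\{\,d(x,e)<|t|-R(t)\,\}$ is $\mathrm{O}\bigl(e^{\ssf Q(|t|-R(t))}e^{-Q\ssf|t|}\bigr)=\mathrm{O}\bigl(e^{-Q\ssf R(t)}\bigr)$, which tends to $0$ as $t\to\pm\infty$ because $R(t)\to+\infty$. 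The concentration statement is then immediate: for $|t|$ large we have $R(t)\ge R_0$, so Corollary~\ref{FinitePropagationSpeedDR} shows that $u$ vanishes on $\{\,d(x,e)>|t|+R(t)\,\}$.

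There is no serious obstacle here; the crux is the exact cancellation of the two competing exponentials — the $e^{-Q\ssf|t|}$ decay of the energy density coming from Lemma~\ref{PointwiseEstimateDR} against the $e^{\ssf Q\ssf|t|}$ growth of the volume — which leaves the decaying factor $e^{-Q\ssf R(t)}$. I would note in passing that the second hypothesis $R(t)=\mathrm{o}(|t|)$ is not needed for the integral to vanish, since only $R(t)\to+\infty$ enters the estimate; it serves merely to ensure that the excluded inner ball still has radius $|t|-R(t)\to+\infty$, so that the concentration inside a shell whose width is negligible compared with its radius is genuinely sharp.
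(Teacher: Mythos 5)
Your argument is exactly the paper's: combine the pointwise decay $\mathrm{O}(e^{-Q|t|/2})$ of $u$, $\nabla_x u$, $\partial_t u$ from Lemma~\ref{PointwiseEstimateDR} with the volume estimate $\operatorname{vol}B(e,|t|-R(t))\asymp e^{\,Q\{|t|-R(t)\}}$ to get the bound $\mathrm{O}(e^{-Q\ssf R(t)})$ for each of the three integrals. Your additional care in choosing $R_0$ and your closing remark on the role of the hypothesis $R(t)=\mathrm{o}(|t|)$ are correct but do not change the route.
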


\noindent
\textit{Proof of Theorem \ref{AsymptoticHuygensDR}\/}.
\ssf By combining Lemma \ref{PointwiseEstimateDR}
with the volume estimate
\begin{equation*}
\operatorname{vol}\ssf B\bigl(e,|t|\!-\!R(t)\bigr)
\asymp\ssf e^{\,Q\ssf\{|t|-R(t)\}}
\quad\text{as \;}t\ssb\to\ssb\pm\infty\ssf,
\end{equation*}
we deduce that the three integrals
\begin{equation*}\begin{aligned}
&\int_{\ssf d\ssf(x,\ssf e)
\ssf<\ssf|t|-\ssf R(t)
}dx\;|\ssf u(x,t)|^2\,,\\
&\int_{\ssf d\ssf(x,\ssf e)
\ssf<\ssf|t|-\ssf R(t)
}\!dx\;|\ssf\nabla_{\ssb x}\ssf u(x,t)|^2\,,\\
&\int_{\ssf d\ssf(x,\ssf e)
\ssf<\ssf|t|-\ssf R(t)
}\!dx\;|\partial_{\ssf t}u(x,t)|^2
\end{aligned}\end{equation*}
are \ssf$\text{O}
\bigl(\hspace{.1mm}e^{\hspace{.2mm}-\hspace{.2mm}Q\,R(t)}\bigr)$
\ssf and hence tend to \,$0$
\,as \,$t\!\to\ssb\pm\infty$\ssf.
\qed

\section{The shifted wave equation on homogeneous trees}
\label{Section4}

This section is devoted to a discrete setting,
which is similar to the continuous setting considered so far.
A homogeneous tree \ssf$\T\!=\!\T_q$
of degree \ssf$q\ssb+\!1\!>\!2$
is a connected graph with no loops
and with the same number \ssf$q\ssb+\!1$
of edges at each vertex.
We shall be con\-tent with a brief review
and we refer to the expository paper \cite{CMS} for more information
(see also the monographs \cite{FP, FN}).

For the counting measure,
the volume of any sphere \ssf$S(x,n)$
in \ssf$\T$ \ssf is given by
\begin{equation*}
\delta(n)\ssf=\,\begin{cases}
\;1
&\text{if \,}n\ssb=\ssb0\ssf,\\
\,(\ssf q\ssb+\!1)\,q^{\ssf n-1}
&\text{if \,}n\!\in\!\N^*.
\end{cases}\end{equation*}
Once we have chosen an origin \ssf$0\!\in\!\T$ and
a geodesic \ssf$\omega\ssb:\ssb\Z\ssb\to\!\T$ \ssf through \ssf$0$,
let us denote by $|x|\hspace{-.7mm}\in\!\N$
\ssf the distance of a vertex \ssf$x\!\in\!\T$ to the origin
and by \ssf$h(x)\hspace{-.6mm}\in\!\Z$
\ssf its horocyclic height (see Figure \ref{UHSPT}).

\begin{figure}
\psfrag{0}[c]{$0$}
\psfrag{1}[c]{$1$}
\psfrag{2}[c]{$2$}
\psfrag{-1}[c]{$-1$}
\psfrag{h}[c]{$h$}
\psfrag{omega}[c]{$\omega$}
\includegraphics[height=70mm]{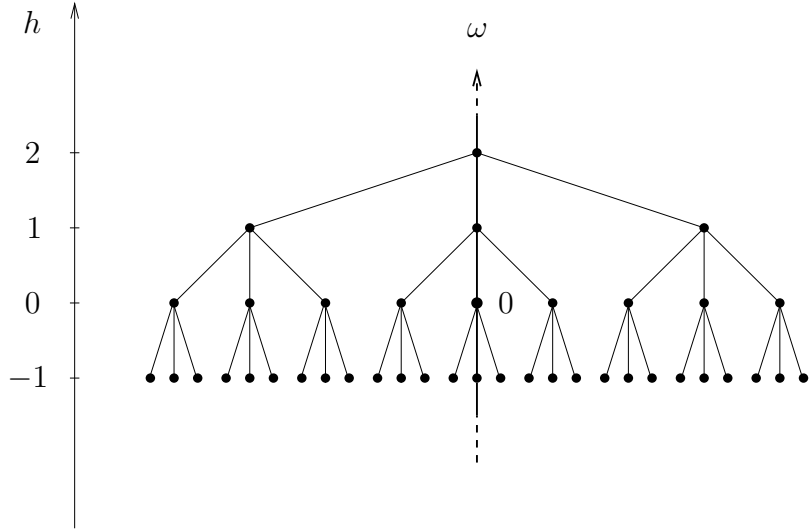}
\caption{Upper half--space picture of \,$\T_3$}
\label{UHSPT}
\end{figure}

The combinatorial Laplacian is defined
on \ssf$\Z$ \ssf by
\begin{equation*}\label{LaplacianZ}
\L^{\ssf\Z}f\ssf(n)
=f(n)-\ssf\tfrac{f(n\ssf+\ssf1)\,+\,f(n\ssf-\ssf1)}2\,,
\end{equation*}
and similarly on \ssf$\T$ \ssf by
\begin{equation}\label{LaplacianTree}
\L^{\ssf\T}\ssb f\ssf(x)
=f(x)-\ssf\tfrac1{q\,+\ssf1}\ssf
\sum\nolimits_{\ssf y\in S(x,1)}\ssb
f(y)\ssf.
\end{equation}
The \ssf$L^2$--\ssf spectrum
of \ssf$\L^{\ssf\T}$
is equal to the interval
\ssf$[\ssf1\!-\ssb\gamma,
1\!+\ssb\gamma\ssf]$,
\ssf where
\begin{equation*}
\gamma\ssf
=\tfrac2{q^{\ssf1/2}\ssf+\,q^{-1/2}}
\in(0,1)\ssf.
\end{equation*}
We have
\begin{equation}\label{RadialTree}
\L^{\ssf\T}\ssb f\ssf(n)\,
=\,\begin{cases}
\;f(0)-f(1)
&\text{if \,}n\ssb=\ssb0\ssf\\
\,f(n)-\frac1{q\,+\ssf1}\,f(n\ssb-\!1)-\frac q{q\,+\ssf1}\,f(n\ssb+\!1)
&\text{if \,}n\!\in\!\N^*
\end{cases}\end{equation}
on radial functions and
\begin{align}
\L^{\ssf\T}
\ssb f\ssf(h)\ssf
&=\ssf f(h)-\tfrac q{q\,+\ssf1}\,f(h\ssb-\!1)
-\tfrac1{q\,+\ssf1}\,f(h\ssb+\!1)
\nonumber\\
&\label{HorocyclicTree}
=\,\gamma\;q^{\frac h2}\,
\L_{\,h}^{\ssf\Z}\ssf
\bigl\{q^{-\frac h2}\ssf f(h)\bigr\}
+(1\!-\ssb\gamma)\,f(h)
\end{align}
on horocyclic functions.

Again, radial Fourier analysis on \ssf$\T$
may be summarized by the following commutative diagram
\begin{equation*}\label{diagramTree}\begin{aligned}
\hspace{-2mm}C^{\ssf\infty}\ssb(
&\R\ssf/\tau\ssf\Z
)_{\ssf\text{even}}\\
\mathcal{H}\!\nearrow\hspace{1.5mm}\approx\hspace{0mm}
&\hspace{6mm}\approx\hspace{1.5mm}\nwarrow\,\mathcal{F}\\
\mathcal{S}(\T)^{\ssf\sharp}\hspace{7mm}
&\overset{\textstyle\approx}
{\underset{\textstyle\A\vphantom{\big|}}\longrightarrow}
\hspace{7mm}\mathcal{S}(\Z)_{\ssf\text{even}}
\end{aligned}\end{equation*}
Here
\begin{equation*}
\H f(\lambda)
=\sum\nolimits_{\ssf x\in\T}
\varphi_\lambda(x)\,f(x)
\qquad\forall\;\lambda\!\in\!\R
\end{equation*}
denotes the spherical Fourier transform on \ssf$\T$,
\begin{equation*}
\A f(h)
=\ssf q^{\frac h2}\ssf
\sum\nolimits_{\hspace{-1.5mm}\substack{
\vphantom{o}\\x\in\T\\h(x)=h}}\hspace{-1mm}
f(|x|)
\qquad\forall\;h\!\in\!\Z
\end{equation*}
the Abel transform and
\begin{equation*}
\F f(\lambda)=\sum\nolimits_{\ssf h\in\Z}
q^{\,i\ssf\lambda\ssf h}\,f(h)
\qquad\forall\;\lambda\!\in\!\R
\end{equation*}
a variant of the classical Fourier transform on \ssf$\Z$\ssf.
Moreover \,$\tau\ssb=\ssb\frac{2\ssf\pi}{\log q}$,
\,$\mathcal{S}(\Z)_{\text{even}}$ \ssf denotes
the space of even functions on \ssf$\Z$
\ssf such that
\begin{equation*}\textstyle
\sup_{\ssf n\in\N^*}\ssf
n^{\ssf k}\,|f(n)|<+\infty
\qquad\forall\;k\!\in\!\N\ssf,
\end{equation*}
and \,$\mathcal{S}(\T)^\sharp$
\ssf the space of radial functions on \ssf$\T$
\ssf such that
\begin{equation*}\textstyle
\sup_{\ssf n\in\N^*}\ssf
n^{\ssf k}\,q^{\frac n2}\,|f(n)|<+\infty
\qquad\forall\;k\!\in\!\N\ssf.
\end{equation*}
Consider finally the dual Abel transform
\begin{equation*}
\A^*\ssb f(n)\ssf
=\ssf\tfrac1{\delta(n)}\,
\sum\nolimits_{\!\substack{
\vphantom{o}\\x\in\T\\|x|=n}}\ssb
q^{\frac{h(x)}2}\ssf f\bigl(h(x)\bigr)
\qquad\forall\;n\!\in\!\N\ssf.
\end{equation*}
The following expressions are obtained by elementary computations.

\begin{lemma}\label{AbelTree}
{\rm (a)}
The Abel transform is given by
\begin{equation*}\begin{aligned}
\A f(h)\ssf
&=\,q^{\frac{|h|}2}\ssf f(|h|)
+\tfrac{q\,-\ssf1}q\ssf
\sum\nolimits_{\ssf k=1}^{+\infty}
q^{\frac{|h|}2+\ssf k}\ssf
f(|h|\!+\ssb2\ssf k)\\
&=\sum\nolimits_{\ssf k=0}^{+\infty}
q^{\frac{|h|}2+\ssf k}\ssf
\bigl\{\ssf f(|h|\!+\ssb2\ssf k)
-f(|h|\!+\ssb2\ssf k
\ssb+\ssb2)\bigr\}
\qquad\forall\;h\!\in\!\Z
\end{aligned}\end{equation*}
and the dual Abel transform by
\begin{equation*}
\A^*\!f(n)
=\tfrac{2\,q}{q\,+\ssf1}\,
q^{-\frac{|n|}2}f(\pm\ssf n)
+\tfrac{q\,-\ssf1}{q\,+\ssf1}\,q^{-\frac{|n|}2}\ssf
\sum\nolimits_{\hspace{-6mm}\substack{
\vphantom{o}\\-|n|\,<\,k\,<\,|n|\\
\text{\rm$k$ \ssf has same parity as $n$}}}
\hspace{-6mm}f(\pm\ssf k)
\end{equation*}
\vspace{-4mm}

\noindent
if \,$n\!\in\!\Z^*$\ssb, resp. $\A^*\!f(0)\ssb=\!f(0)$\ssf.

\noindent
{\rm (b)}
The inverse Abel transform is given by
\begin{equation*}\begin{aligned}
\A^{-1}\ssb f(n)\ssf
&=\ssf\sum\nolimits_{\ssf k=0}^{+\infty}\ssf q^{-\frac n2-\ssf k}\ssf
\bigl\{\ssf f(n\ssb+\ssb2\ssf k)-f(n\ssb+\ssb2\ssf k\ssb+\ssb2)\bigr\}\\
&=\,q^{-\frac n2}f(n)-(q\ssb-\!1)\ssf\sum\nolimits_{\ssf k=1}^{+\infty}\ssf
q^{-\frac n2-\ssf k}\ssf f(n\ssb+\ssb2\ssf k)
\qquad\forall\;n\!\in\!\N
\end{aligned}\end{equation*}
and the inverse dual Abel transform by
\begin{equation*}\begin{aligned}
(\A^*)^{-1}\ssb f(h)\ssf
&=\ssf\tfrac12\,q^{\frac h2}f(h)
+\tfrac12\,q^{-\frac h2}f(1)\\
&\,+\ssf\tfrac12\ssf
\sum\nolimits_{\ssf k=1}^{\frac{h-1}2}
q^{\frac h2-2\hspace{.1mm}k+1}\ssf
\bigl\{\ssf f(h\ssb-\ssb2\ssf k\ssb+\ssb2)-f(h\ssb-\ssb2\ssf k)\bigr\}\\
&=\tfrac{q^{\ssf1/2}\ssf+\,q^{-1/2}}2\,q^{\frac{h-1}2}f(h)
-\tfrac{q\,-\,q^{-1}}2\,q^{-\frac h2}\ssf
\sum\nolimits_{\ssf0\ssf<\,k\;\text{\rm odd}\,<\,h}
\ssf q^{\,k}\ssf f(k)
\end{aligned}\end{equation*}
if \,$h\!\in\!\N$ \ssf is odd,
respectively
\begin{equation*}\textstyle
(\A^*)^{-1}\ssb f\ssf(0)=f\ssf(0)
\end{equation*}
and
\begin{equation*}\begin{aligned}
(\A^*)^{-1}\ssb f(h)\ssf
&=\ssf\tfrac12\,q^{\frac h2}f(h)+\tfrac12\,q^{-\frac h2}f(0)\\
&\,+\ssf\tfrac12\ssf
\sum\nolimits_{\ssf k=1}^{\,\frac h2}
q^{\frac h2-2\hspace{.1mm}k+1}\ssf
\bigl\{\ssf f(h\ssb-\ssb2\ssf k\ssb+\ssb2)-f(h\ssb-\ssb2\ssf k)\bigr\}\\
&=\ssf\tfrac{q^{1/2}\ssf+\,q^{-1/2}}2\;q^{\frac{h-1}2}f(h)
-\tfrac{q^{1/2}\ssf-\,q^{-1/2}}2\;q^{-\frac{h-1}2}f(0)\\
&\,-\ssf\tfrac{q\,-\,q^{-1}}2\,q^{-\frac h2}\ssf
\sum\nolimits_{\ssf0\ssf<\,k\;\text{\rm even}\,<\,h}
\ssf q^{\,k}\ssf f(k)
\end{aligned}\end{equation*}
if \,$h\!\in\!\N^*$ is even.
\end{lemma}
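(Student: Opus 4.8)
The plan is to reduce all four identities to a single combinatorial count on the tree, followed by elementary geometric summations. Fix the boundary point $\omega(+\infty)$ towards which heights increase, so that $h(\omega(j))=j$ along the reference geodesic. For an arbitrary vertex $x$, follow the geodesic ray from $x$ to $\omega(+\infty)$ until it first meets $\omega$\ssf; the confluence is a vertex $\omega(c)$ with $c\ge h(x)$, and the ray climbs from height $h(x)$ to height $c$, whence $d(x,\omega(c))=c-h(x)$. Since the geodesic from $x$ to the origin $0=\omega(0)$ runs up to $\omega(c)$ and then along $\omega$, this gives the basic identity
\[
|x|\,=\,(\,c-h(x)\,)+|c|\,.
\]
First I would count, for a fixed height $h$ and confluence level $c\ge h$, the vertices $x$ with $h(x)=h$ and confluence $\omega(c)$\ssf: there is exactly one when $c=h$ (namely $\omega(h)$), and otherwise one descends $c-h$ steps from $\omega(c)$, avoiding $\omega(c-1)$ at the first step and choosing freely afterwards, which yields $(q-1)\,q^{\,c-h-1}$ vertices. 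Feeding the identity above into this count produces the number $N(m,h)$ of vertices with $|x|=m$ and $h(x)=h$\ssf: it equals $1$ at the tip $m=|h|$ when $h\ge0$, equals $(q-1)\,q^{(m-|h|)/2-1}$ on the intermediate range $|h|<m$, and equals $q^{\ssf|h|}$ at $m=|h|$ when $h<0$ (a degenerate cone accumulating the levels $h\le c\le0$); in every case $m\equiv h\pmod2$. A useful consistency check is $\sum_h N(m,h)=\delta(m)$.

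Next I would substitute these counts into the definitions. For $\A f(h)=q^{h/2}\sum_m N(m,h)\,f(m)$ the tip contributes $q^{|h|/2}f(|h|)$ and the intermediate terms sum to the geometric series $\tfrac{q-1}q\sum_{k\ge1}q^{|h|/2+k}f(|h|+2k)$, which is the first displayed formula; the second, telescoped, form then follows by Abel summation. For $\A^*\!f(n)=\tfrac1{\delta(n)}\sum_{|x|=n}q^{h(x)/2}f(h(x))$ the same counts collapse, after inserting $\delta(n)=(q+1)\,q^{\,n-1}$, to the stated expression with the isolated extreme terms $f(\pm n)$ and the flat interior sum over $-|n|<k<|n|$; here the uniform weight $q^{h/2}q^{(n-h)/2-1}$ in the interior is precisely what cancels the $h$-dependence and leaves a plain sum of $f$-values.

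For part (b) I would exploit that both relations in (a) are triangular in the sequence index. The transform $\A$ sends $(f(m))_{m\ge0}$ to $(\A f(h))_{h\ge0}$ through a geometric upper-triangular array, so its inverse comes from a one-line telescoping; concretely I would verify directly that the claimed $\A^{-1}$ composed with $\A$ reproduces $f$, the coefficient of each $f(n+2m)$ vanishing after summing one finite geometric series. For $(\A^*)^{-1}$ I would instead difference the relation $q^{h/2}\A^*\!f(h)=\tfrac{2q}{q+1}f(h)+\tfrac{q-1}{q+1}\sum_{-h<k<h,\,k\equiv h}f(k)$ at consecutive even steps: passing from $h$ to $h+2$ adjoins exactly the two terms $f(\pm h)$, so the differenced identity becomes a first-order recursion expressing $f(h+2)$ through $f(h)$ and the data $\A^*\!f$, which I would solve explicitly. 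Summing this recursion from the base case splits according to the parity of $h$ and leaves the boundary value $f(1)$ (for $h$ odd) or $f(0)$ (for $h$ even), and this is exactly the origin of the two separate formulas and of their extra boundary terms.

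The conceptual crux is the confluence count of the first paragraph: getting the geometry of the ray to $\omega(+\infty)$, the single exceptional on-geodesic vertex, and the parity constraint right is what makes all four formulas fall out at once. The computationally heaviest step is the inversion of $\A^*$\ssf: the recursion itself is elementary, but carrying the geometric sums through the two parity cases and matching the boundary terms $f(0)$ and $f(1)$ to the stated closed forms requires the most careful bookkeeping.
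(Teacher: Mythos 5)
Your strategy is the right one and is, in substance, the ``elementary computation'' that the paper invokes without writing out (its proof of this lemma is a single sentence), so there is no divergence of method to discuss: the confluence\ssf-point decomposition, the identity $|x|=(c-h(x))+|c|$, the per-level count ($1$ vertex for $c=h$, $(q-1)\,q^{\,c-h-1}$ for $c>h$), the telescoping verification of $\A^{-1}\!\circ\A=\mathrm{id}$, and the first-order recursion obtained by differencing $q^{h/2}\A^*\!f(h)$ at consecutive steps are all correct and do deliver the four formulas.

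There is, however, a concrete slip in your stated count $N(m,h)$ that would derail the computation as written. On the intermediate range $|h|<m$ the confluence level is $c=(m+h)/2$ in \emph{all} cases (for $h<0$ and $m>|h|$ one necessarily has $c>0$, so $m=2c-h$ again), hence
\[
N(m,h)=(q-1)\,q^{\,c-h-1}=(q-1)\,q^{\frac{m-h}2-1},
\]
with $h$, not $|h|$, in the exponent; for $h<0$ this equals $(q-1)\,q^{\frac{m-|h|}2+|h|-1}$, larger than your value by the factor $q^{|h|}$. Your own consistency check detects this: for $m=3$ the correct counts are $N(3,3)=1$, $N(3,1)=q-1$, $N(3,-1)=(q-1)\,q$, $N(3,-3)=q^3$, summing to $\delta(3)=q^3+q^2$, whereas your value $N(3,-1)=q-1$ gives $q^3+2q-1$. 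The error matters downstream: with the corrected count the interior weight in the dual Abel transform is $q^{h/2}N(n,h)=(q-1)\,q^{\frac n2-1}$, genuinely independent of $h$ (which is exactly what produces the flat interior sum over $-|n|<k<|n|$), while your value yields $(q-1)\,q^{\frac n2+h-1}$ for $h<0$; likewise the series term in $\A f(h)$ for $h<0$ would come out off by $q^{|h|}$, contradicting the $|h|$-symmetry of the stated formula. Once this exponent is fixed, everything you describe goes through.
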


We are interested in
the following shifted wave equation on \ssf$\T$\,:
\begin{equation}\label{WaveEquationTree}\begin{cases}
\,\gamma\,\L_{\,n}^{\ssf\Z}\ssf u(x,n)
=\bigl(\L_{\,x}^{\ssf\T}\!
-\!1\!+\ssb\gamma\bigr)\ssf u(x,n)\ssf,\\
\,u(x,0)\ssb=\ssb f(x)\ssf,\;
\{\ssf u(x,1)-u(x,-1)\}/\ssf2\ssb=\ssb g(x)\ssf.\\
\end{cases}\end{equation}
As was pointed out to us by Nalini Anantharaman,
this equation occurs in the recent works \cite{BL1, BL2}.
The unshifted wave equation with discrete time was studied in \cite{CP}
and the shifted wave equation with continuous time in \cite{MS}.

We will solve \eqref{WaveEquationTree}
by applying the following discrete version of \'Asgeirsson's mean value theorem
and by using the explicit expression of the inverse dual Abel transform.

\begin{theorem}\label{AsgeirssonTheoremTree}
Let \,$U$ \ssf be a function on \ssf$\T$
\ssf such that
\begin{equation}\label{AsgeirssonHypothesisTree}
\L_{\,x}^{\ssf\T}\,U(x,y)\ssf
=\,\L_{\,y}^{\ssf\T}\,U(x,y)
\qquad\forall\;x,y\!\in\!\T\ssf.
\end{equation}
Then
\begin{equation*}\label{AsgeirssonMean1Tree}
\sum\nolimits_{\ssf x^{\ssf\prime}\ssb\in S(x,m)}
\sum\nolimits_{\ssf y^{\ssf\prime}\ssb\in S(y,n)}\!
U(x^{\ssf\prime}\!,y^{\ssf\prime})\,=\,
\sum\nolimits_{\ssf x^{\ssf\prime}\ssb\in S(x,n)}
\sum\nolimits_{\ssf y^{\ssf\prime}\ssb\in S(y,m)}\!
U(x^{\ssf\prime}\!,y^{\ssf\prime})
\end{equation*}
for every \ssf$x,y\!\in\!\T$
and \ssf$m,n\!\in\!\N$.
\ssf In particular
\begin{equation}\label{AsgeirssonMean2Tree}
\sum\nolimits_{\ssf x^{\ssf\prime}\ssb\in S(x,n)}\!
U(x^{\ssf\prime}\!,y)\,=\,
\sum\nolimits_{\ssf y^{\ssf\prime}\ssb\in S(y,n)}\!
U(x,y^{\ssf\prime})\,.
\end{equation}
\end{theorem}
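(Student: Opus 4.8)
The plan is to mirror the continuous case (Theorem \ref{AsgeirssonTheorem1DR}) and reduce the whole statement to a symmetry property of double spherical means. For fixed $x,y\in\T$ I would introduce the normalized double spherical mean
\[
M(m,n)=\tfrac1{\delta(m)\,\delta(n)}
\sum\nolimits_{\ssf x'\in S(x,m)}\sum\nolimits_{\ssf y'\in S(y,n)}U(x',y')
\qquad(m,n\in\N),
\]
so that the two sides of the asserted identity \eqref{AsgeirssonMean1Tree} are $\delta(m)\ssf\delta(n)\ssf M(m,n)$ and $\delta(m)\ssf\delta(n)\ssf M(n,m)$. Thus the theorem is \emph{equivalent} to the symmetry $M(m,n)=M(n,m)$, and the ``in particular'' statement \eqref{AsgeirssonMean2Tree} is merely the case $n=0$, since $S(y,0)=\{y\}$ and $\delta(0)=1$.

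Next I would transform the hypothesis \eqref{AsgeirssonHypothesisTree} into an equation for $M$. As recorded in \eqref{RadialTree}, the combinatorial Laplacian $\L^{\ssf\T}$ commutes with radial averaging, so radializing $\L^{\ssf\T}_{\ssf x}U$ around $x$ (resp. $\L^{\ssf\T}_{\ssf y}U$ around $y$) replaces $\L^{\ssf\T}$ by the radial operator $\rad\L$ on the right-hand side of \eqref{RadialTree}, acting on the corresponding discrete radius. Applying this in both variables turns \eqref{AsgeirssonHypothesisTree} into the discrete ``wave equation'' $(\rad\L)_m\,M(m,n)=(\rad\L)_n\,M(m,n)$. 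For $m,n\!\ge\!1$ the two $M(m,n)$–terms cancel and this reads
\[
M(m\ssb-\!1,n)+q\,M(m\ssb+\!1,n)=M(m,n\ssb-\!1)+q\,M(m,n\ssb+\!1),
\]
whereas at $m\!=\!0$ (resp. $n\!=\!0$) the first line of \eqref{RadialTree} yields a lower–order boundary relation.

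The heart of the matter is to deduce $M(m,n)=M(n,m)$. Set $V(m,n)=M(m,n)-M(n,m)$; then $V$ is antisymmetric, vanishes on the diagonal, and satisfies the same equation, which for $m,n\!\ge\!1$ rearranges to $q\,V(m\ssb+\!1,n)-q\,V(m,n\ssb+\!1)=V(m,n\ssb-\!1)-V(m\ssb-\!1,n)$. I would prove $V\!\equiv\!0$ by induction on $N\!=\!m\ssb+\!n$, assuming $V\!=\!0$ on $\{m\ssb+\!n\!\le\!N\}$. Writing $w_a=V(a,N\ssb+\!1\ssb-\!a)$, the displayed relation taken along $m\ssb+\!n\!=\!N$ with $m,n\!\ge\!1$ has a right-hand side at level $N\ssb-\!1$, hence zero, forcing $w_1=w_2=\dots=w_N$; the antisymmetry $w_a=-w_{N+1-a}$ then forces this common value to be $0$. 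Finally the $m\!=\!0$ boundary relation, namely $V(0,N\ssb+\!1)=\tfrac{q+1}q\,V(1,N)-\tfrac1q\,V(0,N\ssb-\!1)$, kills the two remaining corner values $w_0$ and $w_{N+1}=-w_0$, since $V(1,N)=w_1=0$ and $V(0,N\ssb-\!1)=0$ by induction. The base cases $N\!=\!0,1$ reduce to antisymmetry and to $M(0,1)=M(1,0)$, which is itself the $m\!=\!n\!=\!0$ instance of the equation. Multiplying the symmetry back by $\delta(m)\ssf\delta(n)$ gives \eqref{AsgeirssonMean1Tree}, and $n\!=\!0$ gives \eqref{AsgeirssonMean2Tree}.

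I expect this last induction to be the genuine obstacle. In the continuous case one simply invokes smoothness and uniqueness for a hyperbolic equation, but here one must close a purely discrete argument: the three–term interior recursion by itself propagates data off $\{n\!=\!0\}$ and leaves the two corner values $V(0,N\ssb+\!1)$, $V(N\ssb+\!1,0)$ undetermined. The argument therefore genuinely needs \emph{both} the antisymmetry of $V$ (to annihilate the constant antidiagonal block) and the special lower–order form of $\rad\L$ at the apex $n\!=\!0$ (to annihilate the corners); getting these two ingredients to interlock cleanly at every step, including the small base cases, is where the care is required.
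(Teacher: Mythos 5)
Your proposal is correct and follows essentially the same route as the paper: radialize the hypothesis via the double spherical means, reduce the theorem to the symmetry $M(m,n)=M(n,m)$, and prove that symmetry by induction on $m+n$, treating the corner values $(\ell+1,0)$ and $(0,\ell+1)$ separately using the special form of $\rad\L$ at the origin. The only difference is bookkeeping in the inductive step: the paper telescopes the interior recursion along the antidiagonal to obtain $V(m,n)-V(n,m)=q\,\{V(m\!-\!1,n\!-\!1)-V(n\!-\!1,m\!-\!1)\}$ directly, whereas you antisymmetrize first, show the antidiagonal values are constant, and then kill the constant by antisymmetry --- both variants are valid.
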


In order to prove Theorem \ref{AsgeirssonTheoremTree},
we need the following discrete analog of \eqref{RadialDR}.

\begin{lemma}\label{LemmaAsgeirssonTree}
Consider the spherical means
\begin{equation*}\label{SphericalMeanTree}
f_x^{\,\sharp}(n)=\tfrac1{\delta(n)}\ssf
\sum\nolimits_{\ssf y\in S(x,n)}\ssb f(y)
\qquad\forall\;x\!\in\!\T,\;
\forall\;n\!\in\!\N\ssf.
\end{equation*}
Then
\begin{equation*}
(\ssf\L^{\ssf\T}\ssb f\ssf)_{\ssf x}^{\,\sharp}\ssf(n)\,
=\,(\ssf\rad\L\ssf)_n^{\vphantom{\sharp}}\;
f_{\ssf x}^{\,\sharp}(n)\ssf,
\end{equation*}
where \,$\rad\L$
\ssf denotes the radial part \eqref{RadialTree}
of \,$\L^{\ssf\T}$.
\end{lemma}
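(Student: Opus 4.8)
The plan is to fix the centre $x\!\in\!\T$ and to unwind the definition \eqref{LaplacianTree} inside the spherical average. Since averaging is linear, one has
\begin{equation*}
(\ssf\L^{\ssf\T}\ssb f\ssf)_{\ssf x}^{\,\sharp}(n)
=f_{\ssf x}^{\,\sharp}(n)
-\tfrac1{(q+1)\ssf\delta(n)}
\sum\nolimits_{\ssf y\in S(x,n)}
\sum\nolimits_{\ssf z\in S(y,1)}\ssb f(z)\ssf,
\end{equation*}
so the whole statement reduces to evaluating the iterated sum over a vertex $y\!\in\!S(x,n)$ and its neighbours $z\!\in\!S(y,1)$, and comparing the outcome with the radial expression \eqref{RadialTree}.

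The key structural input is the parent--children description of spheres in a tree. Because $\T$ has no loops, every $y\!\in\!S(x,n)$ with $n\!\ge\!1$ has exactly one neighbour in $S(x,n-1)$ (its parent, on the geodesic back to $x$) and exactly $q$ neighbours in $S(x,n+1)$; dually, each $z\!\in\!S(x,n+1)$ has a unique parent in $S(x,n)$, while each $z\!\in\!S(x,n-1)$ is, for $n\!\ge\!2$, the parent of $q$ vertices of $S(x,n)$. I would therefore reorganise the iterated sum according to the shell containing $z$. The bipartite structure forces $z\!\in\!S(x,n-1)\cup S(x,n+1)$, never in $S(x,n)$, and counting these multiplicities together with $\sum_{z\in S(x,m)}f(z)=\delta(m)\ssf f_{\ssf x}^{\,\sharp}(m)$ turns the iterated sum into
\begin{equation*}
\sum\nolimits_{\ssf y\in S(x,n)}\sum\nolimits_{\ssf z\in S(y,1)}\ssb f(z)
=\delta(n)\ssf f_{\ssf x}^{\,\sharp}(n-1)
+q\ssf\delta(n)\ssf f_{\ssf x}^{\,\sharp}(n+1)
\qquad(n\!\ge\!1)\ssf,
\end{equation*}
the count at $n\!=\!1$ needing the small adjustment addressed below. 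Dividing by $(q+1)\ssf\delta(n)$ then yields precisely the coefficients $\tfrac1{q+1}$ and $\tfrac q{q+1}$ of \eqref{RadialTree}.

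The main obstacle is purely the bookkeeping of these normalisations at small radii, where $\delta(m)=(q+1)\ssf q^{m-1}$ fails at $m=0$. For $n\!\ge\!2$ the inner shell feeds in through the identity $q\ssf\delta(n-1)=\delta(n)$ and the outer shell through $\delta(n+1)=q\ssf\delta(n)$, so the displayed formula is immediate. For $n\!=\!1$ the centre $x$ is a neighbour of all $q+1$ vertices of $S(x,1)$, so it is counted $q+1$ times rather than $q$; I would check that $q+1=\delta(1)$ exactly compensates, leaving the same coefficient $\tfrac1{q+1}$ in front of $f_{\ssf x}^{\,\sharp}(0)$. Finally the case $n\!=\!0$ is handled directly: here $S(x,0)=\{x\}$, $\delta(0)=1$ and $\sum_{z\in S(x,1)}f(z)=(q+1)\ssf f_{\ssf x}^{\,\sharp}(1)$, giving $f_{\ssf x}^{\,\sharp}(0)-f_{\ssf x}^{\,\sharp}(1)$, in agreement with the first line of \eqref{RadialTree}. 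Matching the normalisations across the three regimes $n=0$, $n=1$ and $n\!\ge\!2$ is the only delicate point; the underlying geometric count is forced by the tree structure and is otherwise routine.
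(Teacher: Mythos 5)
Your proof is correct and follows essentially the same route as the paper, which simply asserts the outcome of this computation without detail; your parent--children count on the shells $S(x,n\mp1)$ and the normalisation checks at $n=0,1$ are exactly the bookkeeping the paper leaves implicit.
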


\noindent
\textit{Proof of Lemma \ref{LemmaAsgeirssonTree}\/}.
We have
\begin{equation*}
(\ssf\L^{\ssf\T}\ssb f\ssf)_{\ssf x}^{\,\sharp}\ssf(n)\,
=\,\begin{cases}
\;f(x)-f_{\ssf x}^{\,\sharp}(1)
&\text{if \,}n\ssb=\ssb0\ssf,\\
\;f_{\ssf x}^{\,\sharp}(n)
-\frac1{q\,+\ssf1}\ssf f_{\ssf x}^{\,\sharp}(n\ssb-\!1)
-\frac q{q\,+\ssf1}\ssf f_{\ssf x}^{\,\sharp}(n\ssb+\!1)
&\text{if \,}n\!\in\!\N^*\ssb.\\
\end{cases}
\end{equation*}
\vspace{-9.5mm}

\qed
\bigskip

\noindent
\textit{Proof of Theorem \ref{AsgeirssonTheoremTree}\/}.
\ssf Fix \ssf$x,y\!\in\!\T$
and consider the double spherical means
\begin{equation*}\label{DoubleSphericalMeansTree}
U_{\ssf x,y}^{\,\sharp,\sharp}\ssf(m,n)
=\ssf\tfrac1{\delta(m)}\ssf
\sum\nolimits_{\ssf\substack{
\vphantom{o}\\x^{\ssf\prime}\ssb\in S(x,m)}}
\tfrac1{\delta(n)}\ssf
\sum\nolimits_{\ssf\substack{
\vphantom{o}\\y^{\ssf\prime}\ssb\in S(y,n)}}
U(x^{\ssf\prime}\!,y^{\ssf\prime})\,,
\end{equation*}
that we shall denote by \ssf$V\ssb(m,n)$ for simplicity.
According to Lemma \ref{LemmaAsgeirssonTree},
our assumption \eqref{AsgeirssonHypothesisTree} may be rewritten as
\begin{equation}\label{AsgeirssonHypothesisRadialTree}
(\ssf\rad\L\ssf)_m\,
V\ssb(m,n)
=(\ssf\rad\L\ssf)_n\,
V\ssb(m,n)\,.
\end{equation}
Let us prove the symmetry
\begin{equation}\label{SymmetryV}
V\ssb(m,n)=\ssf V\ssb(n,m)
\qquad\forall\;m,n\!\in\!\N
\end{equation}
by induction on \,$\ell\ssb=\ssb m\ssb+\ssb n$\ssf.
First of all,
\eqref{SymmetryV} is trivial if \ssf$\ell\ssb=\ssb0$
\ssf and \eqref{SymmetryV} with \ssf$\ell\ssb=\!1$
is equivalent to \eqref{AsgeirssonHypothesisRadialTree}
with \ssf$m\ssb=\ssb n\ssb=\ssb0$\ssf.
Assume next that \ssf$\ell\ssb\ge\!1$
and that \eqref{SymmetryV} holds for
\ssf$m\ssb+\ssb n\ssb\le\ell\ssf$.
On one hand,
let \ssf$m\!>\!n\!>\!0$ \ssf
with \,$m\ssb+\ssb n\ssb=\ell\ssb+\!1$
and let $1\!\le\ssb k\ssb\le\ssb m\ssb-\ssb n$\ssf.
We deduce from \eqref{AsgeirssonHypothesisRadialTree}
at the point $(m\ssb-\ssb k\ssf,n\ssb+\ssb k\ssb-\!1)$
that
\begin{equation}\begin{aligned}\label{Equation1}
&\;V\ssb(m\ssb-\ssb k\ssb+\!1,\ssf n\ssb+\ssb k\ssb-\!1)
-\ssf V\ssb(m\ssb-\ssb k\ssf,\ssf n\ssb+\ssb k)\ssf=\\
&=\ssf q\;\{\,V\ssb(m\ssb-\ssb k\ssf,\ssf n\ssb+\ssb k\ssb-\ssb2)
-\ssf V\ssb(m\ssb-\ssb k\ssb-\!1,\ssf n\ssb+\ssb k\ssb-\!1)\ssf\}\,.
\end{aligned}\end{equation}
By adding up \eqref{Equation1} over \ssf$k$\ssf,
we obtain
\begin{equation}\label{Equation2}
V\ssb(m,n)-\ssf V\ssb(n,m)
=\ssf q\,\{\,V\ssb(m\ssb-\!1,\ssf n\ssb-\!1)
-\ssf V\ssb(n\ssb-\!1,\ssf m\ssb-\!1)\ssf\}\,,
\end{equation}
which vanishes by induction.
On the other hand,
we deduce from \eqref{AsgeirssonHypothesisRadialTree}
at the points $(\ell,0)$ and $(0,\ell\ssf)$ that
\vspace{-1mm}
\begin{equation*}\begin{cases}
\;V\ssb(\ell\ssb+\!1,0)
=(q\ssb+\!1)\;V\ssb(\ell,1)-\ssf q\;V\ssb(\ell,0)\ssf,\\
\;V\ssb(0,\ell\ssb+\!1)
=(q\ssb+\!1)\;V\ssb(1,\ell\ssf)-\ssf q\;V\ssb(0,\ell\ssf)\ssf.
\end{cases}\end{equation*}
Hence \,$V\ssb(\ell\ssb+\!1,0)\ssb=\ssf V\ssb(0,\ell\ssb+\!1)$
\ssf by using \eqref{Equation2} and by induction.
This concludes the proof of Theorem \ref{AsgeirssonTheoremTree}.
\qed
\bigskip

Let us now solve explicitly
the shifted wave equation \eqref{WaveEquationTree} on $\T$
as we did in Section \ref{Section3} for
the shifted wave equation \eqref{WaveEquationDR} on Damek--Ricci spaces.
Consider first a solution \ssf$u$
\ssf to \eqref{WaveEquationTree}
with initial data \ssf$u(x,0)\!=\!f(x)$
\ssf and
\ssf$\{u(x,1)\!-\ssb u(x,-1)\}/\ssf2\ssb=0$\ssf.
On one hand,
as $(x,n)\ssb\mapsto\ssb u(x,-n)$
satisfies the same Cauchy problem, we have
\ssf$u(x,-n)\ssb=\ssb u(x,n)$
by uniqueness.
On the other hand,
according to \eqref{HorocyclicTree},
the function
\vspace{-1mm}
\begin{equation*}
U(x,y)=\ssf q^{\frac{h(y)}2}\,u(x,h(y))
\qquad\forall\;x,y\!\in\!\T
\end{equation*}
satisfies \eqref{AsgeirssonHypothesisTree}.
Thus, by applying \eqref{AsgeirssonMean2Tree} to \ssf$U$
with \ssf$y\ssb=\ssb0$\ssf,
we deduce that
the dual Abel trans\-form of \,$n\mapsto\ssb u(x,n)$
\ssf is equal to the spherical mean
\ssf$f_x^{\ssf\sharp}(n)$
of the initial datum $f$.
Hence
\begin{equation*}
u(x,n)=(\A^*)^{-1}\ssb\bigl(f_x^{\ssf\sharp}\bigr)(n)
\qquad\forall\;x\!\in\!\T,\;\forall\;n\!\in\!\N\ssf.
\end{equation*}
Consider next a solution \ssf$u$ \ssf to \eqref{WaveEquationTree}
with initial data \ssf$u(x,0)\!=\!0$ \ssf
and \ssf$\{u(x,1)\ssb-u(x,-1)\}/\ssf2$ $=g(x)$\ssf.
Then \ssf$u(x,n)$ \ssf is an odd function of \ssf$n$ \ssf and
\begin{equation*}
v(x,n)=\tfrac{u(x,\ssf n\,+\ssf1)\,-\,u(x,\ssf n\,-\ssf1)}2
\end{equation*}
is a solution to \eqref{WaveEquationTree}
with initial data \ssf$v(x,0)\!=\!g(x)$ \ssf
and \ssf$\{v(x,1)\ssb-v(x,-1)\}/\ssf2\ssb=\ssb0$\ssf.
Hence
\begin{equation*}
u(x,n)=\begin{cases}
\,2\,{\displaystyle
\sum\nolimits_{\,0\ssf<\ssf k\hspace{1mm}\text{odd}\,<\ssf n}
}v(x,k)
&\text{if \,$n\!\in\!\N^*$ is even}\ssf,\vphantom{\Big|}\\
\,g(x)\ssb+2\,{\displaystyle
\sum\nolimits_{\,0\ssf<\ssf k\hspace{1mm}\text{even}\,<\ssf n}}
v(x,k)
&\text{if \,$n\!\in\!\N^*$ is odd}\ssf,\vphantom{\Big|}\\
\end{cases}\end{equation*}
with \ssf$v(x,n)\ssb=\ssb(\A^*)^{-1}\ssb\bigl(g_x^{\ssf\sharp}\bigr)(n)$\ssf.
By using Lemma \ref{AbelTree}.b,
we deduce the following explicit expressions.

\begin{theorem}\label{SolutionWaveEquationTree}
The solution to \eqref{WaveEquationTree} is given by
\begin{equation*}\begin{aligned}
u(x,n)\ssf
&=\ssf\tfrac12\,q^{-\frac{|n|}2}
\sum\nolimits_{\,d\ssf(y,\ssf x)\ssf=\ssf|n|}
\ssb f(y)\,
-\,\tfrac{q\,-\ssf1}2\,q^{-\frac{|n|}2}
\sum\nolimits_{\substack{\vphantom{o}\\
\hspace{-1.5mm}d\ssf(y,\ssf x)\ssf<\,|n|\\
\hspace{-3mm}|n|-d\ssf(y,\ssf x)\hspace{1mm}\text{\rm even}}}
\hspace{-1mm}f(y)\\
&\,+\,\operatorname{sign}(n)\,q^{-\frac{|n|-1}2}
\sum\nolimits_{\hspace{-2.5mm}\substack{\vphantom{o}\\
d\ssf(y,\ssf x)\ssf<\,|n|\\
|n|-d\ssf(y,\ssf x)\hspace{1mm}\text{\rm odd}}}
\hspace{-2mm}g(y)
\qquad\forall\,x\!\in\!\T,\,\forall\,n\!\in\!\Z^*\!,
\end{aligned}\end{equation*}
In other words,
\vspace{-6mm}
\begin{equation}\label{Solution2WaveEquationTree}
u(x,n)=\ssf\overbrace{
\tfrac{M_{\ssf|n|}\ssf-\,M_{\ssf|n|-2}}{2\vphantom{|}}
}^{\textstyle\vphantom{\big|}C_{\hspace{.1mm}n}}
\ssf f\ssf(x)\ssf
+\ssf\overbrace{\vphantom{\tfrac||}
\operatorname{sign}(n)\,M_{\ssf|n|-1}
}^{\textstyle\vphantom{\big|}S_{\hspace{.1mm}n}}
\ssf g\ssf(x)\,,
\end{equation}
where
\begin{equation}\label{Mn}
M_{\ssf n}\ssf f\ssf(x)
=\ssf q^{-\frac n2}\ssf
\sum\nolimits_{\hspace{-2.75mm}\substack{\vphantom{o}\\
d\ssf(y,\ssf x)\ssf\le\,n\\
n\ssf-\ssf d\ssf(y,\ssf x)
\hspace{1mm}\text{\rm even}}}
\hspace{-2.5mm}f(y)
\end{equation}
\vspace{-5mm}

\noindent
if \,$n\!\ge\!0$
\ssf and \,$M_{-1}\!=\ssb0$\ssf.
\end{theorem}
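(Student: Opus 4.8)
The plan is to derive the explicit solution by transcribing the continuous argument of Section~\ref{Section3} into the tree setting, using the discrete \'Asgeirsson theorem (Theorem~\ref{AsgeirssonTheoremTree}) and the inverse dual Abel transform computed in Lemma~\ref{AbelTree}.b. The formulae preceding the statement already reduce everything to two claims: that the solution with data $(f,0)$ is $u(x,n)=(\A^*)^{-1}(f_x^{\ssf\sharp})(n)$, and that the solution with data $(0,g)$ is assembled from $v(x,k)=(\A^*)^{-1}(g_x^{\ssf\sharp})(k)$ by the summation formulae displayed just before the theorem. So the first thing I would do is simply \emph{apply Lemma~\ref{AbelTree}.b} to the radial function $f_x^{\ssf\sharp}$ (resp.\ $g_x^{\ssf\sharp}$) and turn the abstract expression $(\A^*)^{-1}(f_x^{\ssf\sharp})$ into a sum over vertices.

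The key computation is the unwinding of the inverse-dual-Abel formula. Recall that $f_x^{\ssf\sharp}(d)=\frac1{\delta(d)}\sum_{d(y,x)=d}f(y)$, so each value $f_x^{\ssf\sharp}(h-2k)$ or $f_x^{\ssf\sharp}(k)$ appearing in Lemma~\ref{AbelTree}.b is a spherical sum weighted by $1/\delta(\cdot)$. First I would treat the odd-$n$ case, substituting $f_x^{\ssf\sharp}$ into
\begin{equation*}
(\A^*)^{-1}f(h)=\tfrac{q^{1/2}+q^{-1/2}}2\,q^{\frac{h-1}2}f(h)
-\tfrac{q-q^{-1}}2\,q^{-\frac h2}\sum\nolimits_{0<k\ \mathrm{odd}<h}q^{\,k}f(k)\,,
\end{equation*}
and likewise for even $n$. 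The point is that $\delta(d)=(q+1)q^{\,d-1}$ exactly cancels the geometric weights $q^{k}$, $q^{(h-1)/2}$: the leading term contributes $\frac12 q^{-h/2}$ times the full sphere $\{d(y,x)=h\}$, and each interior term contributes, after the cancellation, a uniform coefficient $-\frac{q-1}2 q^{-h/2}$ on the sphere of radius $k$. Collecting these telescoped contributions over all $k$ of the right parity is what produces the two sums in the displayed solution for $u(x,n)$, and rewriting them as differences $M_{|n|}-M_{|n|-2}$ of the counting operators $M_n$ defined in \eqref{Mn} gives the compact form~\eqref{Solution2WaveEquationTree}. The $g$-part is handled by inserting $v(x,k)=(\A^*)^{-1}(g_x^{\ssf\sharp})(k)$ into the prefactorial summation formula and checking that the inner-plus-boundary sums collapse to the single operator $\operatorname{sign}(n)\,M_{|n|-1}$.

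The main obstacle is purely bookkeeping: tracking the parity conditions and the boundary values $f(0),f(1)$ (and the separate cases $h$ odd versus $h$ even in Lemma~\ref{AbelTree}.b) so that every vertex is counted with the correct coefficient and none is double-counted at the sphere $d(y,x)=|n|$. I would verify the cancellation of the $\delta$-weights against the $q^{k}$-weights \emph{term by term} in one parity class, confirm that the telescoping in the second line of Lemma~\ref{AbelTree}.b matches the difference $M_{|n|}-M_{|n|-2}$, and then read off that the even-parity interior contributions reproduce exactly the factor $-\frac{q-1}2 q^{-|n|/2}$. Finally I would check the low-index edge cases $n=0,\pm1$ directly against the initial data $u(x,0)=f(x)$ and $\{u(x,1)-u(x,-1)\}/2=g(x)$, using $M_{-1}=0$, to confirm the normalization of the constants.
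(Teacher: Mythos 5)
Your proposal is correct and follows essentially the same route as the paper: the authors likewise reduce to $u(x,n)=(\A^*)^{-1}(f_x^{\ssf\sharp})(n)$ via the discrete \'Asgeirsson theorem, treat the $g$-data by the difference trick $v(x,n)=\{u(x,n+1)-u(x,n-1)\}/2$, and then obtain the theorem by substituting the spherical means into Lemma~\ref{AbelTree}.b, where the weights $\delta(k)=(q+1)\ssf q^{\ssf k-1}$ cancel the geometric factors exactly as you describe. The telescoping you mention does collapse the $g$-part to $\operatorname{sign}(n)\ssf M_{|n|-1}$, so no gap remains.
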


\begin{remark}\label{CnSn}
Notice that the radial convolution operators
\ssf$C_{\hspace{.1mm}n}$ and
\ssf$S_{\hspace{.1mm}n}$ above
correspond, via the Fourier transform, to the multipliers
\begin{equation*}
\cos_{\ssf q}n\ssf\lambda
\quad\text{and}\quad
\tfrac{\sin_{\ssf q}\ssb n\ssf\lambda}{\sin_{\ssf q}\ssb\lambda}\,,
\end{equation*}
where \,$\cos_{\ssf q}\ssb\lambda\ssb
=\ssb\frac{q^{\ssf i\ssf\lambda}\ssf+\,q^{-i\ssf\lambda}}2$
\,and \,$\sin_{\ssf q}\ssb\lambda\ssb
=\ssb\frac{q^{\ssf i\ssf\lambda}\ssf-\,q^{-i\ssf\lambda}}{2\,i}$\ssf.
\end{remark}

As we did in Section \ref{Section3},
let us next deduce propagation properties
of solutions \ssf$u$
\ssf to the shifted wave equation \eqref{WaveEquationTree}
with initial data \ssf$f\ssb,\ssf g$
\ssf supported in a ball \,$B(x_{\ssf0},N)$.

\begin{corollary}\label{WavePropagationTree}
Under the above assumptions,
\begin{itemize}
\item[\rm (a)]
$\hspace{1.5mm}u(x,n)=\text{\rm O}\bigl(q^{-\frac{|n|}2}\bigr)$
\,$\forall\;x\!\in\!\T$,
$\forall\;n\!\in\!\Z$\ssf,
\item[\rm (b)]
$\hspace{1.5mm}\operatorname{supp}\ssf u\subset
\{\ssf(x,n)\!\in\ssb\T\!\times\!\Z
\mid d\ssf(x,x_{\ssf0})\!\le\!|n|\!+\ssb N\,\}\,.$
\end{itemize}
\end{corollary}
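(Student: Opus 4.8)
\noindent
The plan is to read both assertions directly off the closed form of the solution in Theorem~\ref{SolutionWaveEquationTree}. The case \ssf$n\ssb=\ssb0$ is trivial, since \ssf$u(x,0)\ssb=\ssb f(x)$ is itself supported in \ssf$B(x_{\ssf0},N)$ and bounded, so I would assume \ssf$n\!\in\!\Z^*$ and use \eqref{Solution2WaveEquationTree}, namely \ssf$u(x,n)\ssb=\ssb C_{\ssf n}f(x)+S_{\ssf n}g(x)$ with \ssf$C_{\ssf n}\ssb=\ssb\tfrac12(M_{\ssf|n|}\ssb-\ssb M_{\ssf|n|-2})$ and \ssf$S_{\ssf n}\ssb=\ssb\operatorname{sign}(n)\,M_{\ssf|n|-1}$, the averaging operator \ssf$M_{\ssf n}$ being given by \eqref{Mn}. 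The single structural fact I would exploit is that \ssf$M_{\ssf n}h(x)$ equals the scalar \ssf$q^{-n/2}$ times a finite sum of values \ssf$h(y)$ over vertices \ssf$y$ with \ssf$d(y,x)\!\le\!n$ and \ssf$n\ssb-\ssb d(y,x)$ even; neither the weight nor the summation constraint involves anything beyond the pair \ssf$(x,n)$.

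For the finite propagation speed {\rm(b)}, I would argue purely from the range of summation. Since each of \ssf$M_{\ssf|n|}$, \ssf$M_{\ssf|n|-1}$ and \ssf$M_{\ssf|n|-2}$ involves only vertices \ssf$y$ with \ssf$d(y,x)\!\le\!|n|$, the value \ssf$u(x,n)$ depends solely on \ssf$f$ and \ssf$g$ restricted to the ball \ssf$B(x,|n|)$. Hence, if \ssf$u(x,n)\!\ne\!0$, there is some \ssf$y$ with \ssf$d(y,x)\!\le\!|n|$ at which \ssf$f(y)$ or \ssf$g(y)$ is nonzero, whence \ssf$y\!\in\!B(x_{\ssf0},N)$ by hypothesis. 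The triangle inequality then yields \ssf$d(x,x_{\ssf0})\!\le\!d(x,y)+d(y,x_{\ssf0})\!\le\!|n|\!+\ssb N$, which is exactly the claimed inclusion.

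For the decay estimate {\rm(a)}, I would use the finiteness of the support of the data. Because \ssf$f$ vanishes off \ssf$B(x_{\ssf0},N)$, the sum defining \ssf$M_{\ssf n}f(x)$ has at most \ssf$\#B(x_{\ssf0},N)$ nonzero terms, each bounded by \ssf$\|f\|_\infty$, so that \ssf$|M_{\ssf n}f(x)|\le\#B(x_{\ssf0},N)\,\|f\|_\infty\,q^{-n/2}$ uniformly in \ssf$x$ and \ssf$n$, and likewise for \ssf$g$. Since \ssf$q$ is fixed, replacing \ssf$n$ by \ssf$|n|$, \ssf$|n|\ssb-\ssb1$ or \ssf$|n|\ssb-\ssb2$ changes only the constant, so \ssf$C_{\ssf n}f(x)$ and \ssf$S_{\ssf n}g(x)$ are both \ssf$\text{O}(q^{-|n|/2})$, and hence so is \ssf$u(x,n)$. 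Neither step is a genuine obstacle, which is precisely why the statement is a corollary; the only point deserving care is the uniformity in \ssf$x$ in part~{\rm(a)}, where one must note that the bound \ssf$\#B(x_{\ssf0},N)$ on the number of contributing terms depends only on the fixed support of the data and not on the centre \ssf$x$ of the averaging ball.
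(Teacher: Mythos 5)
Your proposal is correct and follows exactly the route the paper intends: Corollary \ref{WavePropagationTree} is presented there as an immediate consequence of the explicit formula \eqref{Solution2WaveEquationTree}--\eqref{Mn}, with (b) read off from the summation range $d(y,x)\le|n|$ plus the triangle inequality, and (a) from the prefactor $q^{-n/2}$ together with the finitely many nonzero terms coming from the compact support of the data. Your remark on uniformity in $x$ is a sensible extra precision, but there is nothing here that diverges from the paper's (implicit) argument.
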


Obviously Huygens' principle doesn't hold for \eqref{WaveEquationTree},
strictly speaking.
Let us show that it holds asymptotically,
as for even dimensional Damek--Ricci spaces.
For this purpose, define as follows the kinetic energy
\begin{equation*}\label{KineticEnergyTree1}
\K(n)=\tfrac12\ssf\sum\nolimits_{\ssf x\in\T}\,
\bigl|\tfrac{u(x,n\ssf+1)\,-\,u(x,n\ssf-1)}2\bigr|^2
\end{equation*}
and the potential energy
\begin{equation}\begin{aligned}\label{PotentialEnergyTree1}
\P(n)\ssf
&=\,\tfrac1{4\,q}\,
\sum\nolimits_{\hspace{-1.5mm}\substack{
\vphantom{o}\\x,y\in\T\\d\ssf(x,y)=2}}
\bigl|\tfrac{u(x,\ssf n)\,-\,u(y,\ssf n)}2\bigr|^2
-\,\tfrac{(q\,-\ssf1)^2}{8\,q}\,
\sum\nolimits_{\ssf x\in\T}|\ssf
u(x,n)|^2\\
&=\,\tfrac{q\,+\ssf1}8\ssf
\sum\nolimits_{\ssf x\in\T}\ssf
\bigl(\widetilde{\L}_x\!-\ssb\widetilde\gamma\,\bigr)u(x,n)\;
\overline{u(x,n)}
\end{aligned}\end{equation}
for solutions \ssf$u$
\ssf to \eqref{WaveEquationTree}.
Here
\begin{equation*}
\widetilde{\L}\ssb f(x)
=f(x)
-\tfrac1{q\,(q\,+\ssf1)}\,
\sum\nolimits_{\ssf y\in S(x,2)}\ssf
f(y)
\end{equation*}
is the $2$--step Laplacian on \ssf$\T$
\ssf and
\begin{equation*}
\widetilde\gamma\,=\ssf
\tfrac{(q\,-\ssf1)^2}{q\,(q\,+\ssf1)}
\in(0,1)\ssf.
\end{equation*}

\begin{lemma}\label{EnergyTree}
{\rm(a)} The \ssf$L^2$--\ssf spectrum
of \,$\widetilde{\L}$ \ssf is equal to the interval
\,$\bigl[\,\widetilde\gamma,\frac{q\,+\ssf1}q\ssf\bigr]$.
Thus the potential energy \eqref{PotentialEnergyTree1} is nonnegative.

\noindent
{\rm(b)} The total energy
\begin{equation*}\label{TotalEnergyTree1}
\E(n)=\ssf\K(n)+\P(n)
\end{equation*}
is independent of \,$n\!\in\!\Z$\ssf.
\end{lemma}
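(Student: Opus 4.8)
The plan is to reduce both parts to the spectral theory of the averaging operator $A=I-\L^{\ssf\T}$ on $\ell^2(\T)$, which is self-adjoint and, by the stated spectrum $[\ssf1\!-\!\gamma,1\!+\!\gamma\ssf]$ of $\L^{\ssf\T}$, has spectrum $\sigma(A)=[-\gamma,\gamma]$. For part (a) I would first establish the operator identity
\begin{equation*}
\widetilde{\L}\ssf=\ssf\tfrac{q\,+\ssf1}q\,\bigl(I-A^2\bigr).
\end{equation*}
This follows from a direct count: iterating $A$, every $y\!\in\!S(x,1)$ contributes the value $f(x)$ once and its $q$ forward neighbours once each, and each $w\!\in\!S(x,2)$ is reached exactly once, so that $\sum_{y\in S(x,1)}\sum_{z\in S(y,1)}f(z)=(q\!+\!1)\ssf f(x)+\sum_{w\in S(x,2)}f(w)$, whence $A^2=\frac1{q+1}I+\frac q{q+1}\widetilde A$ with $\widetilde A=I-\widetilde{\L}$. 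By the spectral mapping theorem applied to $\lambda\!\mapsto\!\lambda^2$ on $[-\gamma,\gamma]$ one gets $\sigma(A^2)=[\ssf0,\gamma^2\ssf]$, hence $\sigma(\widetilde{\L})=\frac{q+1}q[\ssf1\!-\!\gamma^2,1\ssf]$. It then suffices to check the elementary identity $\frac{q+1}q(1-\gamma^2)=\widetilde\gamma$, which comes from $1-\gamma^2=\frac{(q-1)^2}{(q+1)^2}$. Nonnegativity of \eqref{PotentialEnergyTree1} is immediate, since $\widetilde{\L}-\widetilde\gamma\ge0$.

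For part (b) the first step is to rewrite \eqref{WaveEquationTree} as the three--term recurrence
\begin{equation*}
u(\cdot,n\!+\!1)+u(\cdot,n\!-\!1)=\tfrac2\gamma\,A\,u(\cdot,n),
\end{equation*}
obtained by inserting $\L^{\ssf\Z}_n u=u-\tfrac12\{u(\cdot,n\!+\!1)+u(\cdot,n\!-\!1)\}$ and $\L^{\ssf\T}=I-A$ into the equation. Writing $u_n=u(\cdot,n)$, I would express both energies through $\|u_n\|^2$ and $\langle u_{n+1},u_{n-1}\rangle$. From part (a), $\widetilde{\L}-\widetilde\gamma=\frac4{q+1}I-\frac{q+1}qA^2$, so $\P(n)=\tfrac12\|u_n\|^2-\frac{(q+1)^2}{8q}\|Au_n\|^2$; substituting $Au_n=\tfrac\gamma2(u_{n+1}+u_{n-1})$ together with $\gamma^2=\frac{4q}{(q+1)^2}$ collapses this to $\P(n)=\tfrac12\|u_n\|^2-\tfrac18\|u_{n+1}+u_{n-1}\|^2$. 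Combined with $\K(n)=\tfrac18\|u_{n+1}-u_{n-1}\|^2$, the total energy simplifies to
\begin{equation*}
\E(n)=\tfrac12\|u_n\|^2-\tfrac12\,\Re\langle u_{n+1},u_{n-1}\rangle.
\end{equation*}
Finally, expanding $\langle u_{n+2},u_n\rangle$ via $u_{n+2}=\tfrac2\gamma Au_{n+1}-u_n$ and the self-adjointness of $A$ gives $\Re\langle u_{n+2},u_n\rangle=\|u_{n+1}\|^2-\|u_n\|^2+\Re\langle u_{n+1},u_{n-1}\rangle$, and plugging this into $\E(n\!+\!1)$ returns exactly $\E(n)$.

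I expect the main obstacle to lie in part (a), namely getting the combinatorial identity for $A^2$ right --- correctly accounting for the return-to-$x$ multiplicity $q\!+\!1$ and the bijection between forward double steps and vertices of $S(x,2)$ --- and then matching the spectral endpoint $\frac{q+1}q(1-\gamma^2)$ with $\widetilde\gamma$. Once the identity $\widetilde{\L}=\frac{q+1}q(I-A^2)$ is secured, the remainder of both parts is routine bookkeeping with the constants $\gamma$ and $\widetilde\gamma$ and with the self-adjoint recurrence.
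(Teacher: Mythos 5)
Your proof is correct and follows essentially the same route as the paper: your identity $\widetilde{\L}=\frac{q+1}q\,(I-A^2)$ with $A=I-\L^{\ssf\T}$ is precisely the paper's factorization $\widetilde{\L}=\frac{q+1}q\,\L^{\ssf\T}(2-\L^{\ssf\T})$, and your recurrence $u_{n+1}+u_{n-1}=\frac2\gamma\,A\ssf u_n$ is exactly the paper's rewriting of \eqref{WaveEquationTree}. The only (cosmetic) difference is in part (b): the paper expands $\K(n)+\P(n)$ into explicit vertex sums and lands on an expression manifestly invariant under $n\mapsto n\pm1$, whereas you reduce to $\E(n)=\tfrac12\|u_n\|^2-\tfrac12\,\Re\langle u_{n+1},u_{n-1}\rangle$ and then apply the recurrence once more together with the self-adjointness of $A$.
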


\begin{proof}
(a) follows for instance from the relation
\begin{equation*}
\widetilde{\L}=\tfrac{q\,+\ssf1}q\,
\L^{\ssf\T}\ssf
(\ssf2-\ssb\L^{\ssf\T}\ssf)
\end{equation*}
and from the fact that
the \ssf$L^2$--\ssf spectrum
of \,$\L^{\ssf\T}$
\ssf is equal to the interval
\,$[\ssf1\!-\ssb\gamma,1\!+\ssb\gamma\ssf]$\ssf.

\noindent
(b) Notice that the shifted wave equation
\begin{equation*}
\gamma\,\L_{\,n}^{\ssf\Z}\ssf u(x,n)
=\bigl(\L_{\,x}^{\ssf\T}\!
-\!1\!+\ssb\gamma\bigr)\,u(x,n)
\end{equation*}
amounts to
\begin{equation*}
u(x,n\ssb+\!1)+u(x,n\ssb-\!1)
=\tfrac1{\sqrt{\ssf q\,}}\ssf
\sum\nolimits_{\ssf y\in S(x,1)}\ssb u(y,n)\,.
\end{equation*}
As
\begin{equation*}
\sum\nolimits_{\ssf x\in\T}
\sum\nolimits_{\ssf y,\ssf z\in S(x,1)}
u(y,n)\,\overline{u(z,n)}
=(q\ssb+\!1)\ssf
\sum\nolimits_{\ssf x\in\T}
|\ssf u(x,n)|^2
+\sum\nolimits_{\hspace{-1.5mm}\substack{
\vphantom{o}\\y,\ssf z\in\T\\
d\ssf(y,\ssf z)=2}}\hspace{-1.5mm}
u(y,n)\,\overline{u(z,n)}\,,
\end{equation*}
we have on one hand
\begin{equation}\label{KineticEnergyTree2}\begin{aligned}
\K(n)
&=\ssf\tfrac{q\,+\ssf1}{8\,q}\ssf
\sum\nolimits_{\ssf x\in\T}
|\ssf u(x,n)|^2
+\ssf\tfrac12\ssf
\sum\nolimits_{\ssf x\in\T}
|\ssf u(x,n\ssb\pm\!1)|^2\\
&\,+\ssf\tfrac1{8\,q}\ssf
\sum\nolimits_{\hspace{-1.5mm}\substack{
\vphantom{o}\\x,\ssf y\in\T\\d\ssf(x,\ssf y)=2}}\hspace{-2mm}
u(x,n)\,\overline{u(y,n)}
-\tfrac1{2\ssf\sqrt{\ssf q\,}}\ssf
\sum\nolimits_{\hspace{-1.5mm}\substack{
\vphantom{o}\\x,\ssf y\in\T\\
d\ssf(x,\ssf y)=1}}\hspace{-1.5mm}
\operatorname{Re}\ssf
\bigl\{u(x,n)\,\overline{u(y,n\ssb\pm\!1)}\bigr\}\,.
\end{aligned}\end{equation}
On the other hand,
\begin{equation}\label{PotentialEnergyTree2}
\P(n)=\ssf\tfrac{3\ssf q\,-\ssf1}{8\,q}\ssf
\sum\nolimits_{\ssf x\in\T}
|\ssf u(x,n)|^2
-\ssf\tfrac1{8\,q}\ssf
\sum\nolimits_{\hspace{-1.5mm}\substack{
\vphantom{o}\\x,\hspace{.2mm}y\in\T\\
d\ssf(x,\hspace{.2mm}y)=2}}\hspace{-1.5mm}
u(x,n)\,\overline{u(y,n)}\,.
\end{equation}
By adding up \eqref{KineticEnergyTree2}
and \eqref{PotentialEnergyTree2}, we obtain
\begin{equation*}\label{TotalEnergyTree2}\begin{aligned}
\E(n)&=\ssf\tfrac12\ssf
\sum\nolimits_{\ssf x\in\T}
|\ssf u(x,n)|^2
+\ssf\tfrac12\ssf
\sum\nolimits_{\ssf x\in\T}
|\ssf u(x,n\ssb\pm\!1)|^2\\
&\,-\ssf\tfrac1{2\,\sqrt{\ssf q\,}}\ssf
{\displaystyle\sum\nolimits_{\hspace{-1.5mm}\substack{
\vphantom{o}\\x,\ssf y\in\T\\
d\ssf(x,\ssf y)=1}}}\hspace{-1.5mm}
\operatorname{Re}\ssf
\bigl\{u(x,n)\,\overline{u(y,n\ssb\pm\!1)}\bigr\}
\end{aligned}\end{equation*}
and we deduce from this expression that
\begin{equation*}
\E(n)=\E(n\ssb\pm\!1)\ssf.
\end{equation*}
This concludes the proof of Lemma \ref{EnergyTree}.
\end{proof}

\begin{remark}\label{EnergyConservationTree}
Alternatively, Lemma \ref{EnergyTree}.b can be proved
by expressing the energies \ssf$\K(n)$, $\P(n)$, $\E(n)$
in terms of the initial data \ssf$f$\!, $g$
\ssf and by using spectral calculus.
Specifically,
\begin{equation*}\begin{aligned}
\K(n)
&=\ssf\tfrac18\ssf\sum\nolimits_{\ssf x\in\T}\ssf
(\ssf C_{n+1}\!-C_{n-1})^{\hspace{.1mm}2}f(x)\;\overline{f(x)}\\
&\,+\ssf\tfrac18\ssf\sum\nolimits_{\ssf x\in\T}\ssf
(\ssf S_{n+1}\!-S_{n-1})^{\hspace{.1mm}2}\ssf g(x)\;\overline{g(x)}\\
&\,+\ssf\tfrac14\ssf\operatorname{Re}\ssf
\sum\nolimits_{\ssf x\in\T}\ssf
(\ssf C_{n+1}\!-C_{n-1})\ssf
(\ssf S_{n+1}\!-S_{n-1})\ssf f(x)\;\overline{g(x)}
\end{aligned}\end{equation*}
and
\begin{equation*}\begin{aligned}
\P(n)
&=\ssf\tfrac14\ssf\sum\nolimits_{\ssf x\in\T}\ssf
(1\!-C_{\ssf2})\,C_n^{\,2}f(x)\;\overline{f(x)}\\
&\,+\ssf\tfrac14\ssf\sum\nolimits_{\ssf x\in\T}\ssf
(1\!-C_{\ssf2})\,S_n^{\,2}\ssf g(x)\;\overline{g(x)}\\
&\,+\ssf\tfrac12\ssf\operatorname{Re}\ssf
\sum\nolimits_{\ssf x\in\T}\ssf
(1\!-C_{\ssf2})\,C_n\,S_n\ssf f(x)\;\overline{g(x)}\,.
\end{aligned}\end{equation*}
Here we have used the fact that
\begin{equation*}
\tfrac{q\,+\ssf1}8\,
(\ssf\widetilde{\L}\ssb-\widetilde{\gamma}\hspace{.5mm})
=\tfrac18\,(\ssf3-\ssb M_{\ssf2})
=\tfrac14\,(1\ssb-\ssf C_{\ssf2})\,.
\end{equation*}
Hence
\begin{equation*}
\E(n)
=\sum\nolimits_{\ssf x\in\T}
U_n^+\ssb f(x)\,\overline{f(x)}\,
+\sum\nolimits_{\ssf x\in\T}
V_n^+\ssb g(x)\,\overline{g(x)}\,
+\ssf2\,\operatorname{Re}\ssf
\sum\nolimits_{\ssf x\in\T}
W_n^+\ssb f(x)\,\overline{g(x)}\,,
\end{equation*}
where
\begin{align*}
U_n^+\ssb&=\ssf\tfrac18\,
(\ssf C_{n+1}\!-C_{n-1})^{\hspace{.1mm}2}
+\ssf\tfrac14\,(1\!-C_{\ssf2})\,C_n^{\,2}\ssf,\\
V_n^+\ssb&=\ssf\tfrac18\,
(\ssf S_{n+1}\!-S_{n-1})^{\hspace{.1mm}2}
+\ssf\tfrac14\,(1\!-C_{\ssf2})\,S_n^{\,2}\ssf,\\
W_n^+\ssb&=\ssf\tfrac18\,
(\ssf C_{n+1}\!-C_{n-1})\ssf
(\ssf S_{n+1}\!-S_{n-1})
+\ssf\tfrac14\,(1\!-C_{\ssf2})\,C_n\,S_n\ssf.\\
\end{align*}
By considering the corresponding multipliers, we obtain
\begin{equation*}
U_n^+\ssb=\ssf\tfrac14\,(1\!-C_{\ssf2})\,,\quad
V_n^+\ssb=\ssf\tfrac12\,,\quad
W_n^+\ssb=\ssf0\,,
\end{equation*}
and we conclude that
\begin{equation*}
\E(n)
=\ssf\tfrac14\ssf
\sum\nolimits_{\ssf x\in\T}\ssf
(1\!-C_{\ssf2})\ssf f(x)\,\overline{f(x)}
+\ssf\tfrac12\ssf
\sum\nolimits_{\ssf x\in\T}
|g(x)|^2
=\ssf\E(0)\,.
\end{equation*}
\end{remark}

Let us turn to
the asymptotic equipartition of the total energy
\ssf$\E\!=\!\E(n)$.

\begin{theorem}\label{AsymptoticEquipartitionTree}
Let \ssf$u$ \ssf be
a solution to \eqref{WaveEquationTree}
with finitely supported initial data \ssf$f$ and \ssf$g$\ssf.
Then the kinetic energy \ssf$\K(n)$
and the potential energy \ssf$\P(n)$
tend both to \,$\E/\ssf2$
\,as \,$n\ssb\to\ssb\pm\infty$\ssf.
\end{theorem}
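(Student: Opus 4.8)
\noindent
The plan is to combine the conservation of total energy (Lemma~\ref{EnergyTree}.b) with the spectral expressions set up in Remark~\ref{EnergyConservationTree}. Since \,$\E\!=\!\K(n)+\P(n)$\, does not depend on \ssf$n$, it suffices to show that the difference \,$\K(n)-\P(n)$\, tends to \ssf$0$\, as \,$n\!\to\!\pm\infty$; the two energies then both converge to \,$\E/2$.

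First I would write this difference in the same shape as the total energy in Remark~\ref{EnergyConservationTree}, namely
\begin{equation*}
\K(n)-\P(n)=\sum\nolimits_{\ssf x\in\T}U_n^-f(x)\,\overline{f(x)}
+\sum\nolimits_{\ssf x\in\T}V_n^-g(x)\,\overline{g(x)}
+2\,\Re\sum\nolimits_{\ssf x\in\T}W_n^-f(x)\,\overline{g(x)}\,,
\end{equation*}
where \,$U_n^-,V_n^-,W_n^-$\, are obtained by subtracting the potential--energy multipliers from the kinetic--energy ones. Passing to Fourier multipliers via Remark~\ref{CnSn} and using the elementary identities
\begin{equation*}
C_{n+1}\!-C_{n-1}\;\longleftrightarrow\;-2\,\sin_{\ssf q}\lambda\,\sin_{\ssf q}n\lambda\,,\qquad
S_{n+1}\!-S_{n-1}\;\longleftrightarrow\;2\,\cos_{\ssf q}n\lambda\,,\qquad
1\!-C_{\ssf2}\;\longleftrightarrow\;2\,\sin_{\ssf q}^2\lambda\,,
\end{equation*}
I expect every term to collapse to a single oscillatory symbol:
\begin{equation*}
U_n^-\longleftrightarrow-\tfrac12\,\sin_{\ssf q}^2\lambda\,\cos_{\ssf q}2n\lambda\,,\qquad
V_n^-\longleftrightarrow\tfrac12\,\cos_{\ssf q}2n\lambda\,,\qquad
W_n^-\longleftrightarrow-\tfrac12\,\sin_{\ssf q}\lambda\,\sin_{\ssf q}2n\lambda\,.
\end{equation*}
The structural point is that the parts constant in \ssf$n$, which make up the conserved total energy in Remark~\ref{EnergyConservationTree}, cancel in the difference, leaving only the oscillating factors \,$\cos_{\ssf q}2n\lambda=\cos(2n\lambda\log q)$\, and \,$\sin_{\ssf q}2n\lambda=\sin(2n\lambda\log q)$, and that the apparent singularity \,$1/\sin_{\ssf q}\lambda$\, carried by \ssf$S_n$\, is absorbed by the factor \,$1-C_{\ssf2}$.

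Next I would use spectral calculus for the bounded self--adjoint operator \,$\L^{\ssf\T}$, whose spectral parameter is related to \ssf$\lambda$\, by the symbol \,$1-\gamma\,\cos_{\ssf q}\lambda$, together with the Plancherel formula. Each of the three sums then becomes an integral over the fundamental domain of \,$\R/\tau\ssf\Z$\, of a smooth function of \ssf$\lambda$ (built from the spectral densities of \ssf$f$\, and \ssf$g$\, and the bounded factors \,$\sin_{\ssf q}^2\lambda$, \,$\sin_{\ssf q}\lambda$) against \,$\cos(2n\lambda\log q)$\, or \,$\sin(2n\lambda\log q)$. Because \ssf$f,g$\, are finitely supported, these spectral densities are smooth, so the full integrands are continuous on the compact fundamental domain and hence \,$L^1$. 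The Riemann--Lebesgue lemma then forces each oscillatory integral to \ssf$0$\, as \,$n\!\to\!\pm\infty$, whence \,$\K(n)-\P(n)\to0$\, and, by the constancy of \ssf$\E$, both \ssf$\K(n)$\, and \ssf$\P(n)$\, converge to \,$\E/2$.

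The main obstacle is not the algebra but the justification of this last step: one must verify that, after Plancherel, the integrand is genuinely an \,$L^1$\, function of \ssf$\lambda$\, so that Riemann--Lebesgue applies. This is exactly where finite support of \ssf$f,g$\, enters, guaranteeing smoothness of the spectral densities and hence integrability up to the endpoints of the spectrum \,$[\ssf1\!-\!\gamma,1\!+\!\gamma\ssf]$, where the Plancherel density and the spherical functions must be handled with care. I would also check carefully that no uncancelled \,$1/\sin_{\ssf q}\lambda$\, survives in \ssf$V_n^-$\, or \ssf$W_n^-$, since otherwise the integrands would fail to be bounded near \,$\lambda=0$\, and the argument would break down.
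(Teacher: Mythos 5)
Your reduction to showing \(\K(n)-\P(n)\to 0\), your decomposition into \(U_n^-,V_n^-,W_n^-\), and your multiplier identities (in particular the collapse to \(-\frac14(1-C_2)\ssf C_{2n}\), \(\frac12\ssf C_{2n}\), \(-\frac14(1-C_2)\ssf S_{2n}\), with the \(1/\sin_q\lambda\) singularity of \(S_n\) absorbed by \(1-C_2\)) coincide exactly with the paper's; but the final analytic step is genuinely different. Where you pass to the spectral side, invoke the Plancherel theorem for \(\L^{\ssf\T}\) to represent each sum as an oscillatory integral in \(\lambda\) over \(\R/\tau\Z\), and conclude by Riemann--Lebesgue, the paper stays entirely on the space side: since \(C_{2n}\) is an explicit radial convolution operator built from the means \(M_n\) of \eqref{Mn}, it satisfies the kernel bound \(\|C_{2n}f\|_{\ell^\infty}\le\frac{q-1}2\,q^{-|n|}\|f\|_{\ell^1}\), while \(1-C_2\) is bounded on \(\ell^1\); pairing \(C_{2n}f\in\ell^\infty\) against \((1-C_2)f\in\ell^1\) (and likewise for the \(V_n^-\) and \(W_n^-\) terms) gives the convergence directly. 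The paper's route buys two things: it yields the quantitative exponential rate \(\text{O}(q^{-|n|})\) rather than mere \(o(1)\), and it avoids the nontrivial input you correctly flag as the main obstacle, namely the absolute continuity and smoothness of the spectral densities of finitely supported \(f,g\) needed to make Riemann--Lebesgue legitimate (this is true, via the explicit Plancherel theorem on trees, but it is exactly the Fourier-transform machinery the authors set out to bypass in favour of explicit expressions). Your route, on the other hand, is the standard one in the symmetric-space literature (e.g.\ \cite{BOS}) and generalizes more readily to settings where explicit wave kernels are unavailable; as written it is correct provided you supply the Plancherel/absolute-continuity step you identified.
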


\begin{proof}
Let us show that the difference
\,$\K(n)\ssb-\!\P(n)$
\ssf tends to \ssf$0$\ssf.
By resuming the computations in Remark \ref{EnergyConservationTree},
we obtain
\begin{equation*}
\K(n)\ssb-\ssb\P(n)
=\sum\nolimits_{\ssf x\in\T}\ssb
U_n^-\ssb f(x)\,\overline{f(x)}\ssf
+\sum\nolimits_{\ssf x\in\T}\ssb
V_n^-\ssb g(x)\,\overline{g(x)}\ssf
+\ssf2\ssf\operatorname{Re}
\sum\nolimits_{\ssf x\in\T}\ssb
W_n^-\ssb f(x)\,\overline{g(x)}\,,
\end{equation*}
with
\begin{align*}
U_n^-\ssb&=\ssf\tfrac18\,
(\ssf C_{n+1}\!-C_{n-1})^{\hspace{.1mm}2}\ssb
-\tfrac14\,(1\!-C_{\ssf2})\,C_n^{\,2}
=-\,\tfrac14\,
(1\!-C_{\ssf2})\,C_{\ssf2\ssf n}\,,\\
V_n^-\ssb&=\ssf\tfrac18\,
(\ssf S_{n+1}\!-S_{n-1})^{\hspace{.1mm}2}\ssb
-\tfrac14\,(1\!-C_{\ssf2})\,S_n^{\,2}
=\ssf\tfrac12\,C_{\ssf2\ssf n}\,,\\
W_n^-\!&=\ssf\tfrac18\,
(\ssf C_{n+1}\!-C_{n-1})\,
(\ssf S_{n+1}\!-S_{n-1})
-\tfrac14\,(1\!-\ssb C_{\ssf2})\,C_n\,S_n
=-\ssf\tfrac14\,
(1\!-\ssb C_{\ssf2})\,S_{\ssf2\ssf n}\,.
\end{align*}
As
\begin{equation*}
\|\,C_{\ssf2\ssf n\ssf}
f\ssf\|_{\ssf\ell^{\ssf\infty}}\ssb
\le\tfrac{q\,-\ssf1}2\;q^{-|n|}\,
\|\ssf f\ssf\|_{\ssf\ell^{\ssf1}}
\quad\text{and}\quad
\|\ssf(1\!-\ssb C_{\ssf2})\ssf
f\ssf\|_{\ssf\ell^{\ssf1}}\ssb
\le\bigl\{\tfrac{q\,-\,q^{-1}\!}2\ssb+\ssb2\ssf\bigr\}\,
\|\ssf f\ssf\|_{\ssf\ell^{\ssf1}}\,,
\end{equation*}
the expression
\begin{equation*}
\sum\nolimits_{\ssf x\in\T}
U_n^-\ssb f(x)\,\overline{f(x)}\ssf
=-\,\tfrac14\ssf
\sum\nolimits_{\ssf x\in\T}
C_{\ssf2\ssf n\ssf}f(x)\,
\overline{(1\!-\ssb C_{\ssf2})f(x)}
\end{equation*}
tends to \ssf$0$.
The expressions
\begin{equation*}
\sum\nolimits_{\ssf x\in\T}\ssb
V_n^-\ssb g(x)\,\overline{g(x)}\ssf
=\,\tfrac12\ssf
\sum\nolimits_{\ssf x\in\T}
C_{\ssf2\ssf n\ssf}g(x)\,
\overline{g(x)}
\end{equation*}
and
\begin{equation*}
\sum\nolimits_{\ssf x\in\T}\ssb
W_n^-\ssb f(x)\,\overline{g(x)}\ssf
=-\,\tfrac14\ssf
\sum\nolimits_{\ssf x\in\T}
S_{\ssf2\ssf n\ssf}f(x)\,
\overline{(1\!-\ssf C_{\ssf2})f(x)}
\end{equation*}
are handled in the same way.
This concludes the proof of Theorem \ref{AsymptoticEquipartitionTree}.
\end{proof}



Let us conclude with the asymptotic Huygens principle.

\begin{theorem}\label{AsymptoticHuygensTree}
Let \ssf$u$ \ssf be
a solution to \eqref{WaveEquationTree}
with finitely supported initial data
and let \ssf$(N_n)_{n\in\Z}$
\ssf be a sequence of positive integers such that
\begin{equation*}\begin{cases}
\,N_n\ssb\to+\infty\\
\,N_n=\ssf\text{\rm o}\ssf(|n|)\\
\end{cases}
\quad\text{as \;}n\ssb\to\ssb\pm\infty\ssf.
\end{equation*}
Then the expressions
\begin{equation*}
\sum\nolimits_{\substack{
\vphantom{o}\\\hspace{-4mm}x\in\T\\
\hspace{-6mm}|x|\ssf<\ssf|n|-\ssf N_n}}\hspace{-4mm}
|\,u(x,n)\ssf|^2\,,\hspace{2mm}
\sum\nolimits_{\hspace{-6mm}\substack{
\vphantom{,}\\x,y\in\T\\
|x|,\ssf|y|\ssf<\ssf|n|-\ssf N_n\\
d\ssf(x,\ssf y)=2}}\hspace{-6mm}
|\,u(x,n)\!-\ssb u(y,n)\ssf|^2\,,\hspace{2mm}
\sum\nolimits_{\substack{
\vphantom{o}\\\hspace{-4mm}x\in\T\\
\hspace{-6mm}|x|\ssf<\ssf|n|-\ssf N_n}}\hspace{-4mm}
|\,u(x,n\ssb+\!1)\!-\ssb u(x,n\ssb-\!1)\ssf|^2
\end{equation*}
tend to \ssf$0$ \ssf as \,$n\!\to\!\pm\infty$.
In other words, the energy of \,$u$
\ssf concentrates asymptotically inside the spherical shell
\begin{equation*}
\{\,x\!\in\!\T\mid
|n|\!-\ssb N_n\ssb\le\ssb|x|\ssb\le\ssb|n|\!+\ssb N_n\ssf\}\,.
\end{equation*}
\end{theorem}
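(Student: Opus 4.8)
The plan is to mirror the proof of Theorem~\ref{AsymptoticHuygensDR}, trading the exponential amplitude decay $e^{-(Q/2)|t|}$ for its discrete counterpart $q^{-|n|/2}$ and the volume growth $e^{\,Q|t|}$ for $q^{\,|n|}$. Since the total energy $\E=\E(n)$ is conserved (Lemma~\ref{EnergyTree}.b), it suffices to prove that the three displayed sums, which make up exactly the part of the energy sitting in the inner ball $\{\,|x|<|n|-N_n\,\}$, tend to $0$. Concentration inside the shell then follows: by finite propagation speed (Corollary~\ref{WavePropagationTree}.b) we have $\supp u(\cdot,n)\subset\{\,|x|\le|n|+N\,\}\subset\{\,|x|\le|n|+N_n\,\}$ as soon as $N_n\ge N$, so no energy escapes beyond the outer edge of the shell either.

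First I would invoke the uniform pointwise estimate $u(x,n)=\text{O}(q^{-|n|/2})$ from Corollary~\ref{WavePropagationTree}.a, valid for every $x\in\T$ and $n\in\Z$. This makes each summand $\text{O}(q^{-|n|})$: directly for $|u(x,n)|^2$; for the difference via $|u(x,n)-u(y,n)|^2\le2\,|u(x,n)|^2+2\,|u(y,n)|^2$; and, using $q^{-|n\pm1|/2}\asymp q^{-|n|/2}$, for $|u(x,n+1)-u(x,n-1)|^2$ as well.

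Next I would count the terms using the exponential growth of balls. The ball $\{\,|x|\le|n|-N_n\,\}$ contains $\asymp q^{\,|n|-N_n}$ vertices, so the first and third sums range over $\text{O}(q^{\,|n|-N_n})$ terms; since each vertex has exactly $q\,(q+1)$ vertices at distance $2$, the middle sum does too. Multiplying the per-term bound $\text{O}(q^{-|n|})$ by the $\text{O}(q^{\,|n|-N_n})$ terms, each of the three sums is $\text{O}(q^{-N_n})$, which tends to $0$ as $n\to\pm\infty$ because $N_n\to+\infty$. (The hypothesis $N_n=\text{o}(|n|)$ ensures $|n|-N_n\to+\infty$, so the volume estimate applies for large $|n|$.)

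The argument is thus a clean transcription of the continuous case, the whole point being the cancellation of the two competing exponentials: the amplitude factor $q^{-|n|}$ against the volume factor $q^{\,|n|-N_n}$, leaving the decaying remainder $q^{-N_n}$. The only delicate point — and the main, if modest, obstacle — is that, unlike Lemma~\ref{PointwiseEstimateDR}, where decay is established deep inside the light cone, here I rely on the fact that Corollary~\ref{WavePropagationTree}.a already furnishes the bound $q^{-|n|/2}$ \emph{uniformly} in $x$, so that no separate interior estimate is required and the counting above is enough to conclude.
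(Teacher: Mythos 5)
Your proposal is correct and follows essentially the same route as the paper, which simply transposes the proof of Theorem \ref{AsymptoticHuygensDR}: a pointwise amplitude bound (here the uniform estimate $u(x,n)=\text{O}(q^{-|n|/2})$ of Corollary \ref{WavePropagationTree}.a, which indeed spares you a separate interior lemma) multiplied against the ball-volume count $\asymp q^{\,|n|-N_n}$, yielding $\text{O}(q^{-N_n})\to0$. The bookkeeping for the three sums (Cauchy--Schwarz on the differences, the $(q+1)\ssf q$ count of vertices at distance $2$) is all in order.
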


\noindent
The \textit{proof\/} is similar to the proof of
Theorem \ref{AsymptoticHuygensDR}.
\qed

\end{document}